\documentclass[11pt]{amsart}

\pdfoutput=1

\usepackage{cancel}
\usepackage{esint,amssymb} 
\usepackage{graphicx}
\usepackage{MnSymbol}
\usepackage{mathtools} 
\usepackage[colorlinks=true, pdfstartview=FitV, linkcolor=blue, citecolor=blue, urlcolor=blue,pagebackref=false]{hyperref}
\usepackage{microtype}

\usepackage{bm}
\usepackage{dsfont}
\usepackage{mathrsfs}
\usepackage{xcolor}

\parskip= 2pt

%\usepackage[normalem]{ulem}

%\usepackage{booktabs} % nice tables

%\setcounter{tocdepth}{2}

%\usepackage[notcite,notref,color]{showkeys}

%\definecolor{darkgreen}{rgb}{0,0.5,0}
%\definecolor{darkblue}{rgb}{0,0,0.7}
%\definecolor{darkred}{rgb}{0.9,0.1,0.1}

%\newcommand{\jccomment}[1]{\marginpar{\raggedright\scriptsize{\textcolor{darkgreen}{#1}}}}

%\newcommand{\jbccomment}[1]{\marginpar{\raggedright\scriptsize{\textcolor{darkblue}{#1}}}}

%\newcommand{\jc}[1]{{\color{darkgreen}{#1}}}

%\newcommand{\jb}[1]{{\color{darkblue}{#1}}}

%\newcommand{\rmk}[1]{{\color{cyan}{#1}}}

\newtheorem{proposition}{Proposition}
\newtheorem{theorem}{Theorem}
\newtheorem{lemma}[proposition]{Lemma}

\theoremstyle{remark}

\theoremstyle{definition}

\numberwithin{equation}{section}
\numberwithin{proposition}{section}
\numberwithin{figure}{section}
\numberwithin{table}{section}
%\numberwithin{theorem}{document}

\newcommand{\N}{\mathbb{N}}
\newcommand{\Q}{\mathbb{Q}}
\newcommand{\R}{\mathbb{R}}

\newcommand{\E}{\mathrm{E}}
\renewcommand{\P}{\mathrm{P}}
\newcommand{\EE}{\mathbf{E}}
\newcommand{\PP}{\mathbf{P}}
\newcommand{\F}{\mathcal{F}}

\renewcommand{\le}{\leqslant}
\renewcommand{\ge}{\geqslant}

\renewcommand{\bar}{\overline}

\renewcommand{\hat}{\widehat}

\newcommand{\Ll}{\left}
\newcommand{\Rr}{\right}
\renewcommand{\d}{\mathrm{d}}
\newcommand{\dr}{\partial}

\newcommand{\1}{\mathbf{1}}

\newcommand{\mcl}{\mathcal}
\newcommand{\msf}{\mathsf}

\newcommand{\msc}{\mathscr}
\newcommand{\al}{\alpha}
\newcommand{\be}{\beta}
\newcommand{\ga}{\gamma}

\newcommand{\si}{\sigma}

\newcommand{\prog}{\mathsf{Prog}}
\newcommand{\mart}{\mathsf{Mart}}

\newcommand{\bprog}{\mathbf{Prog}}
\newcommand{\bmart}{\mathbf{Mart}}

%% The averaged sum notation

%\newcommand{\avsum}{\mathop{\mathpalette\avsuminner\relax}\displaylimits}

%\makeatletter
%\newcommand\avsuminner[2]{%
%  {\sbox0{$\m@th#1\sum$}%
%   \vphantom{\usebox0}%
%   \ooalign{%
%     \hidewidth
%     \smash{\rule[.5ex]{1.5ex}{.8pt} \relax}%
%     \hidewidth\cr
%     $\m@th#1\sum$\cr
%   }%
%  }%
%}
%\makeatother

%
%
%
%
%

\begin{document}

\author{Jean-Christophe Mourrat}
\address[Jean-Christophe Mourrat]{Department of Mathematics, ENS Lyon and CNRS, Lyon, France}

\keywords{}
\subjclass[2010]{}
\date{\today}

\title{Un-inverting the Parisi formula}

\begin{abstract}
The free energy of any system can be written as the supremum of a functional involving an energy term and an entropy term. Surprisingly, the limit free energy of mean-field spin glasses is expressed as an infimum instead, a phenomenon sometimes called an inverted variational principle. Using a stochastic-control representation of the Parisi functional and convex duality arguments, we rewrite this limit free energy as a supremum over martingales in a Wiener space. 
\end{abstract}

\maketitle

%
%
%
%%%%%%%%%%%%%%%%%%%%%%%%%%%%
%%%%%%%%%%%%%%%%%%%%%%%%%%%%
%
%
%
\section{Introduction and main result}

Let $h \in \R$ and $(\beta_p)_{p \ge 2}$ be a sequence of nonnegative real numbers such that the function $\xi(r) := \sum_{p \ge 2} \be_p^2 r^p$ is finite for every $r \in \R$. For every integer $N \ge 1$, let $(H_N'(\sigma))_{\sigma \in \R^N}$ be the centered Gaussian field with covariance such that, for every $\si, \tau \in \R^N$,
\begin{equation*}  %\label{e.}
\mathbb E[H_N'(\sigma) H_N'(\tau)] = N \xi \Ll( \frac{\sigma \cdot \tau}{N} \Rr) ,
\end{equation*}
and for every $\si \in \R^N$, let 
\begin{equation*}  %\label{e.}
H_N(\si) := H_N'(\sigma) + h \sum_{i = 1}^N \si_i.
\end{equation*}
The object of study of this paper is the limit free energy
\begin{equation}
\label{e.def.f}
f := \lim_{N\to\infty}\frac{1}{N}\mathbb E \log \bigg(\frac 1 {2^N}\sum_{\sigma\in\{-1,1\}^N} \exp(H_N(\sigma))\bigg).
\end{equation}
An explicit representation for this limit was conjectured in \cite{parisi1979infinite, parisi1980order, parisi1980sequence, parisi1983order, MPV} and then proved rigorously in \cite{gue03, Tpaper, pan.aom, pan}. In order to state this result, we denote by $\Pr([0,1])$ the set of probability measures over $[0,1]$. For each $\mu \in \Pr([0,1])$, we define $\Phi_\mu = \Phi_\mu(t,x) : [0,1]\times \R\to \R$ to be the solution to the backwards parabolic equation
\begin{equation}
\label{e.def.phimu}
-\dr_t \Phi_\mu(t,x) = \frac{\xi''(t)}{2} \Ll( \dr_x^2 \Phi_\mu(t,x) + \mu[0,t] \big(\dr_x \Phi_\mu(t,x)\big)^2 \Rr), 
\end{equation}
with terminal condition
\begin{equation}
\label{e.def.phimu.init}
\Phi_\mu(1,x) = \log \cosh x. 
\end{equation}
The \emph{Parisi formula} states that the limit free energy $f$  exists and is given by
\begin{equation}
\label{e.parisi.formula}
f = \inf_{\mu \in \Pr([0,1])} \Ll\{ \Phi_\mu(0,h)- \frac 1 2 \int_0^1 t \xi''(t) \mu[0,t] \, \d t \Rr\} .
\end{equation}
It was shown in \cite{auffinger2015parisi} that the variational problem in \eqref{e.parisi.formula} admits a unique minimizer. For ``most'' choices of $\xi$, this minimizer encodes the asymptotic law of the overlap between two independent samples from the Gibbs measure associated with the energy function $H_N$, see \cite[Corollaries~3.1 and 3.4]{pan}.

Any expression of the form $\log \E[\exp(f(X))]$ can be rewritten as a supremum over probability measures of an energy term and an entropy term, see for instance \cite[Corollaries~4.14 and 4.15]{boucheron2013concentration}. Such a formulation has a clear interpretation based on physical quantities or large-deviations considerations. It would seem reasonable that aspects of this structure be preserved in the limit of large system size, provided that we understand the asymptotics of the relevant energy and entropy terms. It thus initially came as a surprise that the expression in \eqref{e.parisi.formula} takes the form of an infimum instead, and to this day I am not aware of a compelling physical interpretation of this variational problem. In fact, for more complex models with multiple types, such a variational representation of the limit free energy is no longer valid in general, see \cite[Section~6]{mourrat2020nonconvex}.

The main goal of this paper is to give an alternative representation of the limit free energy $f$ as a supremum instead. In order to state the result, we define, for every $x \in \R$,
\begin{equation}
\label{e.def.small.phi}
\phi(x) := \log \cosh(x+h),
\end{equation}
and let $\phi^*$ denote the convex dual of $\phi$, so that for every $\lambda \in \R$,
\begin{equation*}  %\label{e.}
\phi^*(\lambda) := \sup_{x \in \R} \Ll( \lambda x - \phi(x) \Rr) \in \R \cup \{+\infty\}.
\end{equation*}
A classical calculation yields that $\phi^*$ is infinite outside of $[-1,1]$, while for every $\lambda \in [-1,1]$, 
\begin{equation*}  %\label{e.}
\phi^*(\lambda) = \frac 1 2 \Ll[ (1+\lambda) \log (1+\lambda) + (1-\lambda) \log(1-\lambda) \Rr] - \lambda h.
\end{equation*}

Let $\mathscr P = (\Omega,(\mcl F_t)_{t \in [0,1]}, \PP)$ be a filtered probability space. The $\sigma$-algebras $(\mcl F_t)_{t \in [0,1]}$ are assumed to be complete, that is, they contain every subset of any null-measure set. We also assume that $\msc P$ is sufficiently rich that one can define a Brownian motion $(W_t)_{t \in [0,1]}$  over it (in particular, the process~$W$ is adapted and has independent increments with respect to the filtration~$(\mcl F_t)_{t \in [0,1]}$). We denote by $\bmart$ the space of bounded martingales over $\msc P$. 
\begin{theorem}[un-inverted Parisi formula]
\label{t.main}
The limit free energy $f$ defined in~\eqref{e.def.f} is given by
\begin{multline}  
\label{e.main}
f = \sup_{\alpha \in \bmart} \bigg\{\EE \Ll[  \al_1 \int_0^1 \sqrt{\xi''(t)}\, \d W_t - \phi^*(\al_1)\Rr] 
\\- \frac 1 2 \sup_{t \in [0,1]} \int_t^1 \xi''(s) (s - \EE[\al_s^2 ]) \, \d s\bigg\}.
\end{multline}
Moreover, there exists a unique $\alpha \in \bmart$ that realizes the supremum in~\eqref{e.main}.
\end{theorem}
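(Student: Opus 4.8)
The plan is to combine a stochastic-control representation of the Parisi PDE with convex duality, and then to resolve the resulting minimax problem by hand, constructing an explicit saddle point out of the Parisi minimizer.

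\emph{Step 1: control representation.} I would start from the fact that, for each $\mu \in \Pr([0,1])$, the solution of \eqref{e.def.phimu}--\eqref{e.def.phimu.init} is smooth, satisfies $|\dr_x \Phi_\mu| \le 1$, and is the value function of the stochastic control problem
\begin{equation*}
\Phi_\mu(0,h) = \sup_{v} \EE\Ll[ \phi\Ll( \int_0^1 \xi''(s)\mu[0,s] v_s\, \d s + \int_0^1 \sqrt{\xi''(s)}\, \d W_s \Rr) - \frac 1 2 \int_0^1 \xi''(s)\mu[0,s] v_s^2\, \d s \Rr],
\end{equation*}
the supremum running over bounded adapted processes $v$, with optimal feedback control $v_t = \dr_x \Phi_\mu(t, X_t)$ along the closed-loop diffusion $\d X_t = \xi''(t)\mu[0,t]\dr_x\Phi_\mu(t,X_t)\,\d t + \sqrt{\xi''(t)}\,\d W_t$, $X_0 = h$. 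This is the verification theorem for \eqref{e.def.phimu}: ``$\ge$'' follows from It\^o's formula applied to $\Phi_\mu(t,X^v_t)$ for an arbitrary admissible $v$, and ``$\le$'' from inserting the closed-loop control.

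\emph{Step 2: convex duality and reduction to martingales.} Since $\phi$ is convex, $\EE[\phi(Y)] = \sup_\Lambda \EE[\Lambda Y - \phi^*(\Lambda)]$ for any random variable $Y$, the supremum over $\F_1$-measurable $\Lambda$ and attained at $\Lambda = \phi'(Y) \in (-1,1)$. Inserting this into Step 1, exchanging the two suprema, and performing the $v$-maximization pointwise --- for fixed $\Lambda$ the integrand $\xi''(s)\mu[0,s]\, \EE[\alpha_s v_s - \tfrac 1 2 v_s^2]$, with $\alpha_s := \EE[\Lambda \mid \F_s]$, is maximized over $\F_s$-measurable $v_s$ at $v_s = \alpha_s$ --- leads to
\begin{equation*}
\Phi_\mu(0,h) = \sup_{\alpha \in \bmart} \Ll\{ \EE\Ll[ \alpha_1 \int_0^1 \sqrt{\xi''(t)}\, \d W_t - \phi^*(\alpha_1) \Rr] + \frac 1 2 \int_0^1 \xi''(s)\mu[0,s]\, \EE[\alpha_s^2]\, \d s \Rr\},
\end{equation*}
where I use that a bounded martingale is recovered from its terminal value through $\alpha_s = \EE[\alpha_1 \mid \F_s]$ and that $\phi^*$ is infinite off $[-1,1]$, so that no constraint on $\alpha_1$ is needed. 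Substituting into \eqref{e.parisi.formula} gives $f = \inf_\mu \sup_{\alpha \in \bmart} G(\mu,\alpha)$ with $G$ defined by
\begin{equation*}
G(\mu,\alpha) := \EE\Ll[ \alpha_1 \int_0^1 \sqrt{\xi''(t)}\, \d W_t - \phi^*(\alpha_1) \Rr] - \frac 1 2 \int_0^1 \xi''(s)\mu[0,s]\big( s - \EE[\alpha_s^2]\big)\, \d s .
\end{equation*}

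\emph{Step 3: the minimax (the main obstacle).} For fixed $\alpha$, $\mu \mapsto G(\mu,\alpha)$ is affine and weakly continuous, and since $\int_0^1 \xi''(s)\mu[0,s] g(s)\, \d s = \int_{[0,1]} \big( \int_t^1 \xi''(s) g(s)\, \d s \big)\, \mu(\d t)$ by Fubini, its infimum over $\Pr([0,1])$ is attained at a Dirac mass; hence $\sup_\alpha \inf_\mu G(\mu,\alpha)$ is exactly the right-hand side of \eqref{e.main}, while $\inf_\mu\sup_\alpha G = f$ by Step 2, so the theorem amounts to the minimax identity $\inf_\mu \sup_\alpha G = \sup_\alpha \inf_\mu G$. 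The inequality ``$\ge$'' is free; the reverse is the hard part, and I do not expect an abstract minimax theorem to apply, since $G$ is not concave --- not even quasi-concave --- in $\alpha$ (the term $\tfrac 1 2 \int \xi''\mu[0,s]\EE[\alpha_s^2]\,\d s$ is convex). Instead I would produce a saddle point directly. Let $\mu^*$ be the unique Parisi minimizer \cite{auffinger2015parisi}, let $X^*$ solve the closed-loop SDE of Step 1 at $\mu^*$, and set $\alpha^*_t := \dr_x\Phi_{\mu^*}(t, X^*_t)$. Differentiating \eqref{e.def.phimu} in $x$ and applying It\^o's formula shows $\alpha^*$ is a bounded martingale with $\alpha^*_1 = \tanh(X^*_1) = \phi'(X^*_1 - h)$, so $\alpha^* \in \bmart$ and it simultaneously realizes all the optimizations of Steps 1--2 at $\mu^*$; in particular $G(\mu^*,\alpha^*) = \sup_\alpha G(\mu^*,\alpha) = \Phi_{\mu^*}(0,h) - \tfrac 1 2 \int_0^1 t\xi''(t)\mu^*[0,t]\,\d t = f$. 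It then remains to check $\inf_\mu G(\mu,\alpha^*) = G(\mu^*,\alpha^*)$, i.e.\ that $\mu^*$ minimizes the affine functional $G(\cdot,\alpha^*)$; by the Dirac-mass computation this means $\mu^*$ is supported on $\arg\max_{t\in[0,1]}\int_t^1 \xi''(s)(s - \EE[(\alpha^*_s)^2])\, \d s$, which is precisely the first-order optimality condition for the Parisi measure \cite{auffinger2015parisi}, once one observes that $\EE[(\alpha^*_s)^2] = \EE[(\dr_x\Phi_{\mu^*}(s, X^*_s))^2]$. Then $f = \inf_\mu G(\mu,\alpha^*) \le \sup_\alpha \inf_\mu G \le \inf_\mu \sup_\alpha G = f$, which establishes \eqref{e.main} and exhibits $\alpha^*$ as a maximizer.

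\emph{Step 4: uniqueness and loose ends.} If $\bar\alpha \in \bmart$ also attains the supremum in \eqref{e.main}, then $\inf_\mu G(\mu,\bar\alpha) = f$, and the sandwich $f = \inf_\mu G(\mu,\bar\alpha) \le G(\mu^*,\bar\alpha) \le \sup_\alpha G(\mu^*,\alpha) = f$ forces $G(\mu^*,\bar\alpha) = \sup_\alpha G(\mu^*,\alpha)$. Undoing the duality of Step 2, $\bar\alpha$ viewed as a control is optimal for the control problem at $\mu^*$ and $\bar\alpha_1 = \phi'(Y)$ with $Y := \int_0^1 \xi''\mu^*[0,s]\bar\alpha_s\,\d s + \int_0^1 \sqrt{\xi''}\,\d W$; completing the square in It\^o's formula gives $\tfrac 1 2 \EE\int_0^1 \xi''(s)\mu^*[0,s]\big(\bar\alpha_s - \dr_x\Phi_{\mu^*}(s, X^{\bar\alpha}_s)\big)^2\,\d s = 0$, so $\bar\alpha_s = \dr_x\Phi_{\mu^*}(s, X^*_s) = \alpha^*_s$ for $\xi''(s)\mu^*[0,s]\,\d s \otimes \PP$-a.e.\ $(s,\omega)$, whence $Y = X^*_1 - h$, $\bar\alpha_1 = \phi'(X^*_1 - h) = \alpha^*_1$, and $\bar\alpha_s = \EE[\bar\alpha_1 \mid \F_s] = \alpha^*_s$ for all $s$. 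The remaining ingredients --- the $C^{1,2}$-regularity of $\Phi_\mu$, the bounds on $\dr_x\Phi_\mu$ and its Lipschitz dependence on $x$, and well-posedness of the closed-loop SDE --- are standard, and if needed I would establish the identities first for finitely supported $\mu$ and pass to the limit.
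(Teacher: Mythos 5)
Your proposal is correct, and Steps~1--2 are essentially the paper's Lemmas~2.1 and~2.2: the stochastic-control representation of the Parisi PDE, the convex-duality rewrite $\EE[\phi(Y)] = \sup_\Lambda \EE[\Lambda Y - \phi^*(\Lambda)]$, and the observation that combining the two suprema and projecting onto $\F_s$-conditional expectations collapses the control $v$ and the dual variable $\Lambda$ into a single bounded martingale $\alpha$. Step~4 also matches the paper's uniqueness argument (the ``completing the square'' in It\^o's formula that forces $\bar\alpha_s = \dr_x\Phi_{\mu^*}(s,X^*_s)$ on the relevant set, then $\bar\alpha_1 = \phi'(X^*_1 - h)$, then the martingale property to pin down all of $\bar\alpha$).

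Where you genuinely diverge from the paper is in Step~3, and this is the substantive part of the comparison. The paper does \emph{not} construct the saddle point by hand. Instead, it embeds the martingale data into the set $\msf K_0 = \{(\EE[\alpha_1\int\sqrt{\xi''}\,\d W - \phi^*(\alpha_1)],\, (\EE[\alpha_t^2])_t)\} \subset \R\times L^1([0,1])$, passes to the closed convex hull $\msf K$, proves $\msf K$ is compact, and invokes the Fan--Sion minimax theorem on the bilinear functional over $\Pr([0,1])\times\msf K$. Returning from $\msf K$ to actual martingales then requires a ``mixed strategies'' randomization argument (Lemma~2.4), and exhibiting a maximizing martingale requires a further enlargement via Prokhorov's theorem (Lemma~2.5); the fact that the maximizer is Brownian-measurable is only observed \emph{a posteriori}, which is what lets the paper conclude that the enlargements were harmless. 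You bypass all of this: you construct the candidate saddle $(\mu^*,\alpha^*)$ directly, using the first-order optimality condition for the Parisi minimizer (support of $\mu^*$ contained in $\arg\min_t\int_t^1\xi''(s)(\EE[(\dr_x\Phi_{\mu^*}(s,X^*_s))^2]-s)\,\d s$) as an input, and then close the sandwich $f = \inf_\mu G(\cdot,\alpha^*) \le \sup_\alpha\inf_\mu G \le \inf_\mu\sup_\alpha G = f$. This is more elementary (no abstract minimax theorem, no probability-space enlargement, and $\alpha^*$ is manifestly $\F^W$-adapted from the start), at the cost of importing the Euler--Lagrange condition for $\mu^*$ from outside. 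The paper's route, conversely, \emph{derives} that condition as property~(1) of Theorem~1.2 rather than assuming it, which makes it a bit more self-contained. Both are valid; yours trades abstraction for an explicit construction. One small citation caveat: be sure the directional-derivative formula for $\mu\mapsto\Phi_\mu(0,h)$ and the resulting support condition appear in the form you invoke them in \cite{auffinger2015parisi} (or cite the variant in the Jagannath--Tobasco or Chen papers, where it is stated more explicitly); within your own framework this is also a one-line consequence of Step~2, since Step~2 exhibits $\Phi_\mu(0,h)$ as a supremum of functionals affine in $\mu$, so it would be cleaner to derive it there rather than cite it.
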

Theorem~\ref{t.main} is a consequence of a representation of the optimizers of \eqref{e.parisi.formula} and \eqref{e.main} as a dual pair in a saddle-point problem. For every $\mu \in \Pr([0,1])$  and $\al \in \bmart$, we define
\begin{multline}  
\label{e.def.Gamma}
\Gamma(\mu,\al) :=  \EE \bigg[ \al_1 \int_0^1 \sqrt{\xi''(t)}\, \d W_t - \phi^*(\al_1) 
\\ - \frac{1}{2}\int_{0}^{1}\xi''(t)\mu[0,t] (t - \alpha^{2}_t)\, \d t \bigg] .
\end{multline}
\begin{theorem}[saddle-point problem for optimizers]
\label{t.minimax}
Let $\bar \mu \in \Pr([0,1])$ denote the unique minimizer of \eqref{e.parisi.formula}, and let $\bar \alpha \in \bmart$ denote the unique maximizer of \eqref{e.main}. The limit free energy \eqref{e.def.f} is given by
\begin{equation}  
\label{e.f.supinf}
f = \Gamma(\bar \mu, \bar \alpha) = \inf_{\mu \in \Pr([0,1])} \, \sup_{\al \in \bmart} \Gamma(\mu,\alpha) = \sup_{\al \in \bmart} \, \inf_{\mu \in \Pr([0,1])}  \Gamma(\mu,\alpha).
\end{equation}
%In particular, for every $\mu \in \Pr([0,1])$ and $\al \in \bmart$, we have
%\begin{equation}  
%\label{e.pair.optimality}
%\Gamma(\bar \mu,\al) \le \Gamma(\bar \mu, \bar \al) \le \Gamma(\mu, \bar \al),
%\end{equation}
Moreover, the following two properties of $\bar \mu$ and $\bar \alpha$ are valid and, taken together, characterize the pair $(\bar \mu,\bar \al)$ uniquely in $\Pr([0,1]) \times \bmart$. 

(1) The support of $\bar \mu$ is a subset of the set of maximizers of the mapping 
\begin{equation}  
\label{e.def.map.t.min}
\Ll\{
\begin{array}{rcl}  %\label{}
[0,1] & \to & \R \\
t & \mapsto & \int_t^1 \xi''(s) (s - \EE[\bar \al_s^2 ]) \, \d s .
\end{array}
\Rr.
\end{equation}

(2) Letting $(X_t)_{t \in [0,1]}$ be the strong solution to 
\begin{equation}  
\label{e.def.X.alphabar}
\Ll\{
\begin{array}{ll}  %\label{}
X_0 = h, \\
\d X_t = \xi''(t) \bar \mu[0,t] \dr_x \Phi_{\bar \mu}(t, X_t) \, \d t + \sqrt{\xi''(t)} \, \d W_t,
\end{array}
\Rr.
\end{equation}
we have, for every $t \in [0,1]$,
\begin{equation}  
\label{e.def.bar.alpha}
\bar \al_t = \dr_x \Phi_{\bar \mu}(t,X_t) =  \dr_x \Phi_{\bar \mu}(0,h) + \int_0^t \sqrt{\xi''(s)} \dr_x^2 \Phi_{\bar \mu}(s,X_s) \, \d W_s.
\end{equation}
\end{theorem}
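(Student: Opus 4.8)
The plan is to prove \eqref{e.f.supinf} by exhibiting the pair $(\bar\mu,\bar\al)$ as a saddle point of $\Gamma$, which amounts to computing each of the two partial optimizations of $\Gamma$ in closed form. \emph{Inner supremum.} I would first show that, for every $\mu\in\Pr([0,1])$,
\begin{equation*}
  \sup_{\al\in\bmart}\Gamma(\mu,\al)=\Phi_\mu(0,h)-\frac12\int_0^1 t\,\xi''(t)\,\mu[0,t]\,\d t ,
\end{equation*}
i.e.\ the Parisi functional, whose infimum over $\mu$ is $f$ by \eqref{e.parisi.formula}. Fix $\al\in\bmart$; finiteness of $\EE[\phi^*(\al_1)]$ forces $\al_1\in[-1,1]$ a.s., hence $\al_t=\EE[\al_1\mid\mcl F_t]\in[-1,1]$. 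Let $Y$ solve $\d Y_t=\xi''(t)\mu[0,t]\al_t\,\d t+\sqrt{\xi''(t)}\,\d W_t$ with $Y_0=h$. Applying Itô's formula to $\Phi_\mu(t,Y_t)$ and using \eqref{e.def.phimu}, the pointwise bound $\lambda u-\tfrac12u^2\le\tfrac12\lambda^2$, the Fenchel inequality $\log\cosh y=\phi(y-h)\ge\al_1(y-h)-\phi^*(\al_1)$, the martingale identity $\EE[\al_1\al_t]=\EE[\al_t^2]$, and Itô's isometry to rewrite $\EE[\al_1\int_0^1\sqrt{\xi''(t)}\,\d W_t]$, I obtain $\Gamma(\mu,\al)\le\Phi_\mu(0,h)-\tfrac12\int_0^1 t\xi''(t)\mu[0,t]\,\d t$, with equality iff $\al_t=\dr_x\Phi_\mu(t,Y_t)$ for a.e.\ $t$ with $\xi''(t)\mu[0,t]>0$ and $\al_1=\tanh Y_1=\dr_x\Phi_\mu(1,Y_1)$. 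Since $\dr_x^2\Phi_\mu$ is bounded, strong uniqueness then identifies $Y$ with the solution $X$ of \eqref{e.def.X.alphabar} (with $\mu$ for $\bar\mu$); differentiating \eqref{e.def.phimu} in $x$ shows $t\mapsto\dr_x\Phi_\mu(t,X_t)$ is a bounded martingale, so the unique maximizer is $\al^\mu_t:=\dr_x\Phi_\mu(t,X_t)$, which in particular establishes $\al^\mu\in\bmart$ and the second identity in \eqref{e.def.bar.alpha}. Taking $\mu=\bar\mu$ gives $\Gamma(\bar\mu,\bar\al)=f$.

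\emph{Inner infimum.} By Fubini's theorem, $\int_0^1\xi''(t)\mu[0,t](t-\EE[\al_t^2])\,\d t=\int_{[0,1]}\psi_\al(s)\,\mu(\d s)$ where $\psi_\al(s):=\int_s^1\xi''(t)(t-\EE[\al_t^2])\,\d t$ is continuous and bounded; since the extreme points of $\Pr([0,1])$ are Dirac masses,
\begin{equation*}
  \inf_{\mu\in\Pr([0,1])}\Gamma(\mu,\al)=\EE\Big[\al_1\!\int_0^1\!\sqrt{\xi''(t)}\,\d W_t-\phi^*(\al_1)\Big]-\frac12\sup_{s\in[0,1]}\psi_\al(s) ,
\end{equation*}
which is exactly the functional maximized in \eqref{e.main}. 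Hence $\sup_\al\inf_\mu\Gamma$ equals the right-hand side of \eqref{e.main}, while $\inf_\mu\sup_\al\Gamma=f$ by the previous step; combined with the generic inequality $\sup_\al\inf_\mu\Gamma\le\inf_\mu\sup_\al\Gamma$, everything in \eqref{e.f.supinf} (and Theorem~\ref{t.main}) reduces to proving $\inf_\mu\Gamma(\mu,\bar\al)=f$.

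\emph{First-order condition and conclusion.} For $\nu\in\Pr([0,1])$ set $\mu_\eps:=(1-\eps)\bar\mu+\eps\nu$. I would differentiate $\eps\mapsto\Phi_{\mu_\eps}(0,h)$ at $\eps=0$ by linearizing \eqref{e.def.phimu}, solving the resulting linear equation, and applying Itô's formula to the linearized solution along $X=X^{\bar\mu}$; the drift of $X$ cancels the first-order transport term exactly, leaving $\tfrac{\d}{\d\eps}\big|_{0^+}\Phi_{\mu_\eps}(0,h)=\tfrac12\int_0^1\xi''(t)(\nu[0,t]-\bar\mu[0,t])\,\EE[\bar\al_t^2]\,\d t$. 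Together with the explicit second term of the Parisi functional this gives
\begin{equation*}
  \frac{\d}{\d\eps}\Big|_{0^+}\,\sup_{\al\in\bmart}\Gamma(\mu_\eps,\al)=-\frac12\int_0^1\xi''(t)\big(\nu[0,t]-\bar\mu[0,t]\big)\big(t-\EE[\bar\al_t^2]\big)\,\d t ,
\end{equation*}
which is $\ge0$ for all $\nu$ since $\bar\mu$ minimizes \eqref{e.parisi.formula}. By the Fubini identity this reads $\int\psi_{\bar\al}\,\d\nu\le\int\psi_{\bar\al}\,\d\bar\mu$ for every $\nu$, hence $\int\psi_{\bar\al}\,\d\bar\mu=\sup_s\psi_{\bar\al}(s)$, so $\bar\mu$ is supported on the maximizers of \eqref{e.def.map.t.min}: this is property~(1). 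Consequently $\inf_\mu\Gamma(\mu,\bar\al)=\EE[\bar\al_1\int_0^1\sqrt{\xi''(t)}\,\d W_t-\phi^*(\bar\al_1)]-\tfrac12\sup_s\psi_{\bar\al}(s)=\Gamma(\bar\mu,\bar\al)=f$, so $(\bar\mu,\bar\al)$ is a saddle point and \eqref{e.f.supinf} follows. For uniqueness: $\bar\mu$ is the unique minimizer of \eqref{e.parisi.formula} by \cite{auffinger2015parisi}; $\bar\al$ is the unique maximizer of $\Gamma(\bar\mu,\cdot)$ by the equality analysis of the first step, and hence the unique maximizer in \eqref{e.main}, since any such maximizer $\al'$ satisfies $\Gamma(\bar\mu,\al')=f=\sup_\al\Gamma(\bar\mu,\al)$; property~(2) holds by construction of $\bar\al$, and if $(\mu,\al)$ satisfies (1)--(2) then (2) forces $\al=\al^\mu$ while (1) makes $\mu$ a critical point of the convex Parisi functional, hence $\mu=\bar\mu$ by \cite{auffinger2015parisi}.

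\emph{Main obstacle.} The delicate step is the first-order computation: one must justify differentiability of $\eps\mapsto\Phi_{\mu_\eps}(0,h)$, i.e.\ well-posedness and regularity of the linearized Parisi equation, boundedness of $\dr_x\Phi_\mu$, $\dr_x^2\Phi_\mu$, $\dr_x^3\Phi_\mu$, and the interchange of $\tfrac{\d}{\d\eps}$ with the expectation, for a general $\bar\mu$ specified only through $t\mapsto\bar\mu[0,t]$; this is precisely where the analytic properties of the Parisi PDE à la \cite{auffinger2015parisi} enter. Secondary, routine points are the strong-uniqueness identification of $Y$ with $X$ in the equality case of the first step and the reduction of the optimization over $\Pr([0,1])$ to Dirac masses.
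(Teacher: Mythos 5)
Your proposal is correct in outline but takes a genuinely different route from the paper's. The ``inner supremum'' and ``inner infimum'' steps mirror the paper's Lemma~\ref{l.prog.to.mart} and the integration-by-parts identity \eqref{e.ibp}: you fold the stochastic-control representation \eqref{e.variat.phimu} and the convex duality between $\phi$ and $\phi^*$ into a single It\^o computation rather than first invoking Lemma~\ref{l.variat.phimu}, which is a legitimate reorganization. The substantial divergence is in the proof of the saddle-point identity. The paper convexifies and compactifies the image of $\bmart$ under $\al\mapsto(\chi,\ga)$, invokes the Ky Fan--Sion minimax theorem, and then de-randomizes back to a $W$-measurable martingale (Lemmas~\ref{l.interchange}--\ref{l.max.mart}); properties~(1)--(2) are then \emph{consequences} of the saddle relations \eqref{e.pair.optimality}. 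You instead prove property~(1) first, as a first-order condition at $\bar\mu$, and assemble the saddle from it. This is sound, but it is worth being precise about what it costs. Since $\mu\mapsto\Phi_\mu(0,h)$ is a supremum of affine functionals, the cheap envelope/subgradient inequality only gives the lower bound $\liminf_{\eps\to 0^+}\eps^{-1}\big(\Phi_{\mu_\eps}(0,h)-\Phi_{\bar\mu}(0,h)\big)\ge\tfrac12\int_0^1\xi''(t)(\nu-\bar\mu)[0,t]\,\EE[\bar\al_t^2]\,\d t$; combined with minimality of $\bar\mu$ in the Parisi functional (which also only yields a lower bound on the one-sided derivative), this gives no sign information. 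The matching \emph{upper} bound on the directional derivative is exactly what you need, and it requires the linearized-PDE/Feynman--Kac argument to be made rigorous --- third-order $x$-regularity of $\Phi_\mu$, well-posedness of the linearized Parisi equation, and the interchange of $\eps$-derivative and expectation --- or, alternatively, a Danskin-type stability argument for $\mu\mapsto\al^\mu$. This is precisely the regularity theory \`a la \cite{auffinger2015parisi,jagannath2016dynamic} that you flag as the main obstacle; it is available, but it is strictly more than what the paper draws on, and the paper's minimax route is designed to sidestep it, at the cost of the randomization and Prokhorov compactness steps. Your route avoids those (since your $\bar\al$ is $W$-measurable by construction, so no enlargement of the probability space is ever needed) and yields a more self-contained derivation of the optimality conditions, trading compactness/convexity machinery for heavier PDE regularity input.
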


For each measure $\mu \in \Pr([0,1])$, the Parisi formula \eqref{e.parisi.formula} can be used to obtain an upper bound on the limit free energy $f$. In particular, we can write a first replica-symmetric upper bound obtained by imposing the support of $\mu$ to be a singleton, and then progress along the hierarchy of replica symmetry breaking by allowing the support of $\mu$ to contain two elements, then three elements, etc. Using the characterization above, we can associate to each $\mu \in \Pr([0,1])$ a corresponding lower bound on the free energy, as described in the next theorem. This result could for instance facilitate the construction of certified numerical approximations of the limit free energy.

\begin{theorem}[RSB lower bound]
\label{t.rsb} For every $\mu \in \Pr([0,1])$, we have
\begin{equation}
\label{e.rsb}
0 \le \Phi_\mu(0,h) -  \frac 1 2 \int_0^1 t \xi''(t) \mu[0,t] \, \d t  - f \le \frac 1 2 \int \bigg( g_\mu(t) - \inf_{\Ll[0,1\Rr]} g_\mu \bigg) \, \d \mu(t),
\end{equation}
where the function $g_\mu$ is such that, letting $(X_t)_{t \in [0,1]}$ be the strong solution to 
\begin{equation}  
%\label{e.def.X.alphabar}
\Ll\{
\begin{array}{ll}  %\label{}
X_0 = h, \\
\d X_t = \xi''(t) \mu[0,t] \dr_x \Phi_{\mu}(t, X_t) \, \d t + \sqrt{\xi''(t)} \, \d W_t,
\end{array}
\Rr.
\end{equation} 
we have for every $t \in [0,1]$ that
\begin{equation}
\label{e.def.gmu}
g_\mu(t) := \int_t^1 \xi''(s) \Ll( \EE \Ll[ \Ll( \dr_x \Phi_\mu(s,X_s) \Rr) ^2 \Rr]  - s \Rr) 	\, \d s.
\end{equation}
Moreover, the right-hand side of \eqref{e.rsb} is zero if and only if $\mu \in \Pr([0,1])$ is the unique minimizer to \eqref{e.parisi.formula}. 
\end{theorem}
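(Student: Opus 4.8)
The plan is to deduce all three inequalities from the saddle-point structure furnished by Theorem~\ref{t.minimax}, together with the explicit formula for $\Gamma$. Write $P(\mu) := \Phi_\mu(0,h) - \frac12\int_0^1 t\xi''(t)\mu[0,t]\,\d t$ for the Parisi functional, so that the middle quantity in \eqref{e.rsb} is exactly $P(\mu) - f$. The leftmost inequality $P(\mu)\ge f$ is immediate from the Parisi formula \eqref{e.parisi.formula}, since $f$ is the infimum of $P$ over $\Pr([0,1])$. For the upper bound, the key identity to establish is
\begin{equation*}
P(\mu) = \sup_{\al\in\bmart}\Gamma(\mu,\al),
\end{equation*}
which should follow from the stochastic-control representation of $\Phi_\mu$ alluded to in the introduction: the term $\EE[\al_1\int_0^1\sqrt{\xi''}\,\d W_t - \phi^*(\al_1)]$ is the dual (entropy) form of $\Phi_\mu(0,h)$ once one recognizes $\al_1$ as the terminal value of an admissible martingale, and the inner maximization in $\al$ over the drift is what produces the Hopf--Cole/HJB structure of \eqref{e.def.phimu}. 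Granting this, and using that the outer $\inf_\mu\sup_\al\Gamma = f$ from \eqref{e.f.supinf}, we get
\begin{equation*}
P(\mu) - f = \sup_{\al\in\bmart}\Gamma(\mu,\al) - \inf_{\nu}\sup_{\al}\Gamma(\nu,\al) = \sup_{\al}\Gamma(\mu,\al) - \Gamma(\bar\mu,\bar\al).
\end{equation*}

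Next I would bound this difference by comparing the $\al$-optimizer for the measure $\mu$ against $\bar\al$. Let $\al^\mu\in\bmart$ denote the maximizer of $\Gamma(\mu,\cdot)$; by the same computation as in Theorem~\ref{t.minimax}(2) (but with $\mu$ in place of $\bar\mu$), $\al^\mu_t = \dr_x\Phi_\mu(t,X_t)$ where $X$ solves the SDE in the statement of Theorem~\ref{t.rsb}. The crucial point is that $\Gamma(\mu,\al^\mu)$ and the Parisi functional share the same "energy" part, so that
\begin{equation*}
P(\mu) - f = \Gamma(\mu,\al^\mu) - \Gamma(\bar\mu,\bar\al) \le \Gamma(\mu,\al^\mu) - \inf_\nu \Gamma(\nu,\al^\mu),
\end{equation*}
using $f = \sup_\al\inf_\nu\Gamma(\nu,\al) \ge \inf_\nu\Gamma(\nu,\al^\mu)$. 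Now for fixed $\al = \al^\mu$, the map $\nu\mapsto\Gamma(\nu,\al^\mu)$ is affine in $\nu$ (it depends on $\nu$ only through $\nu[0,t]$, linearly), namely $\Gamma(\nu,\al^\mu) = C - \frac12\int_0^1\xi''(t)\nu[0,t](t-\EE[(\al^\mu_t)^2])\,\d t$ for a constant $C$ independent of $\nu$. Writing $\nu[0,t] = \int \1_{\{s\le t\}}\,\d\nu(s)$ and exchanging the order of integration, one finds $\Gamma(\nu,\al^\mu) = C - \frac12\int g_\mu'(\cdot)$-type expression; more precisely, a direct computation gives $\Gamma(\mu,\al^\mu) - \Gamma(\nu,\al^\mu) = \frac12\int g_\mu(t)\,\d\mu(t) - \frac12\int g_\mu(t)\,\d\nu(t)$, where $g_\mu$ is exactly the function in \eqref{e.def.gmu} (after noting $\al^\mu_s = \dr_x\Phi_\mu(s,X_s)$). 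Minimizing the affine functional $\nu\mapsto\int g_\mu\,\d\nu$ over $\Pr([0,1])$ is achieved by a Dirac mass at any minimizer of $g_\mu$, giving $\inf_\nu\int g_\mu\,\d\nu = \inf_{[0,1]}g_\mu$, hence
\begin{equation*}
P(\mu) - f \le \frac12\int g_\mu(t)\,\d\mu(t) - \frac12\inf_{[0,1]}g_\mu = \frac12\int\Big(g_\mu(t) - \inf_{[0,1]}g_\mu\Big)\,\d\mu(t),
\end{equation*}
which is \eqref{e.rsb}.

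For the final claim, the right-hand side vanishes iff $g_\mu(t) = \inf_{[0,1]}g_\mu$ for $\mu$-a.e.\ $t$, i.e.\ $\supp\mu$ lies in the set of minimizers of $g_\mu$. When $\mu = \bar\mu$ one has $\al^{\bar\mu} = \bar\al$ and $g_{\bar\mu}(t) = -\int_t^1\xi''(s)(s - \EE[\bar\al_s^2])\,\d s$, so "$\supp\mu \subseteq \arg\min g_\mu$" is precisely condition (1) of Theorem~\ref{t.minimax}; since (1)--(2) characterize $(\bar\mu,\bar\al)$ uniquely and (2) determines $\al^\mu$ from $\mu$, vanishing of the right-hand side forces $\mu = \bar\mu$. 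Conversely if $\mu = \bar\mu$ the right-hand side is zero by (1). I expect the main obstacle to be the first step: carefully justifying $P(\mu) = \sup_\al\Gamma(\mu,\al)$ for \emph{every} $\mu$ (not just the minimizer), including the measurability/integrability bookkeeping needed to identify the optimal martingale $\al^\mu_t = \dr_x\Phi_\mu(t,X_t)$ as genuinely bounded and to push the representation of $\Phi_\mu$ from the smooth case (finitely supported $\mu$, where \eqref{e.def.phimu} is a classical HJB equation) to general $\mu\in\Pr([0,1])$ by the usual Lipschitz-in-$\mu$ approximation; once that representation and the identification of $\al^\mu$ are in hand, the rest is the affine-minimization argument above, which is routine.
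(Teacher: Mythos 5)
Your argument is correct and essentially matches the paper's proof: the lower bound comes directly from the Parisi formula \eqref{e.parisi.formula}, the upper bound is obtained by evaluating $\Gamma(\mu,\cdot)$ at the $\mu$-optimal martingale $\al^\mu_t = \dr_x\Phi_\mu(t,X_t)$ and combining the integration by parts \eqref{e.ibp} with $f \ge \inf_\nu\Gamma(\nu,\al^\mu)$ (which is exactly what Theorem~\ref{t.main} gives after plugging in $\al^\mu$), and the identity $P(\mu) = \sup_{\al\in\bmart}\Gamma(\mu,\al)$ that you flag as the main worry is precisely the content of Lemma~\ref{l.prog.to.mart} together with \eqref{e.ibp}, valid for every $\mu\in\Pr([0,1])$, so there is no gap there. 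For the converse implication (right-hand side zero $\Rightarrow$ $\mu = \bar\mu$) the paper uses a shorter route than yours: the sandwich $0 \le P(\mu) - f \le 0$ forces $P(\mu) = f$, and uniqueness of the Parisi minimizer \cite{auffinger2015parisi} then gives $\mu = \bar\mu$ directly, without invoking the full uniqueness of the saddle pair $(\bar\mu,\bar\al)$ from Theorem~\ref{t.minimax}(1)--(2); your version is also correct but does slightly more work.
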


I expect that similar results can be proved for models with soft spins, using for instance \cite[Proposition~3.2]{mourrat2019parisi} and  \cite[Corollary~1.3]{mourrat2020extending}. The formulas for the limit free energy given in \cite{panchenko2005free} and \cite{mourrat2020extending} in this case are of a form similar to the infimum in \eqref{e.parisi.formula}, but with an additional parameter dependence that we then maximize over; this last step is meant to control the norm of the configuration $\si \in \R^N$ and is in line with classical large-deviations theory. A generalization of Theorem~\ref{t.main} to this case would allow us to write the limit free energy as a simple supremum instead. 

The present paper is inspired by a number of earlier works which we now briefly review. In \cite{jagannath2017low}, a dual problem was identified for the ground state energy of spherical models, and then extended to a similar representation for the free energy at any temperature in \cite{jagannath2018bounds}. In this context, the Parisi formula for the limit free energy can be rewritten in a form called the Crisanti-Sommers formula. This formula is simpler to manipulate than \eqref{e.parisi.formula}, and in particular, it is manifestly convex in the variable $\mu$. The dual problem identified in \cite{jagannath2017low, jagannath2018bounds} takes the form of an obstacle problem. 

Another series of inspiring works relates to the design of efficient algorithms that identify spin configurations whose energy is as high as possible. Important progress on this question was achieved in \cite{subag2021following} for spherical models. In this context, the energy value reached by the algorithm can be written explicitly as a simple function of $\xi$. The approach was then generalized to the models we consider here in \cite{montanari2021optimization, elalaoui2021optimization, sellke2021optimizing}, in the form of an approximate message passing algorithm with some tunable parameters. The asymptotic energy value reached by the algorithm can be described by an explicit formula, and optimizing over the free parameters gives us a variational formula for the best possible value $\mathsf{ALG}$ reached by this class of algorithms. This optimization problem has a structure similar to that in \eqref{e.main}. In particular, a key insight of \cite{subag2021following} is to consider algorithms that proceed via orthogonal increments, which in the limit yield martingales which we seek to optimize over as in \eqref{e.main}. The ground state energy can be represented as a Parisi-type formula analogous to \eqref{e.parisi.formula}, see \cite{auffinger2017parisi}, and it was shown in \cite{elalaoui2021optimization} that the original formulation of $\mathsf{ALG}$  as a supremum admits a dual representation that is similar to this Parisi formula for the ground state, the only difference being that the minimization is carried over a larger set. Closely related works include \cite{huang2021tight, huang2023algorithmic} where it is shown that a large class of algorithms will fail to find a configuration with energy exceeding $\mathsf{ALG}$. A method for sampling the Gibbs measure that may also be related to the present work was introduced in \cite{elalaoui2022sampling}. Other related works include~\cite{addario2019algorithmic, ho2023sampling, ho2022efficient}, where these optimization and sampling problems were investigated in detail for more analytically tractable models whose thermodynamics was derived in \cite{capocaccia1987existence, bovier2004derrida1, bovier2004derrida2}.

The starting point for the proofs of Theorem~\ref{t.main} and \ref{t.minimax} is a representation of the function $\Phi_\mu$ as the value function of a problem of stochastic control, which was introduced and developed in \cite{bovier2009aizenman, auffinger2015parisi, jagannath2016dynamic}. One of the main features of this representation is that it makes the convexity of the mapping $\mu \mapsto \Phi_\mu$ transparent, while I am not aware of any alternative method for verifying this convexity property. We then seek a dual to the minimization problem in~\eqref{e.parisi.formula}, in the sense of convex analysis, and the functional $\Gamma$ naturally appears. The interchange of the infimum and supremum in \eqref{e.f.supinf} requires some care, since for fixed $\mu$, the functional $\Gamma(\mu,\cdot)$ is not concave in general (although it is in the high-temperature regime $\xi''(1) \le 1$). We proceed by enlarging the probability space so as to ``randomize'' our choice of martingale, as in the construction of Nash equilibria with mixed strategies. Justifying that the supremum in~\eqref{e.main} is achieved requires yet another enlargement of the probability space. However, we can then identify this optimizer explicitly according to \eqref{e.def.bar.alpha}, and in particular observe that it is measurable with respect to the filtration of Brownian motion, and so we can ultimately conclude that these enlargements of the probability space were not necessary after all.

%
%
%
%%%%%%%%%%%%%%%%%%%%%%%%%%%%
%%%%%%%%%%%%%%%%%%%%%%%%%%%%
%
%
%
\section{Proofs}

We start by stating the representation of $\Phi_\mu$ alluded to above and due to~\cite{bovier2009aizenman, auffinger2015parisi, jagannath2016dynamic}. In order to do so, we let 
\begin{equation}
\label{e.def.Wiener}
\mathscr W := \big(C([0,1]), (\mcl F_t)_{t \in [0,1]}, \P\big)
\end{equation}
be the canonical Wiener space, with canonical random variable $(W_t)_{t \in [0,1]}$ and with associated expectation $\E$. The $\sigma$-algebras $(\mcl F_t)_{t \in [0,1]}$ are assumed to be complete. We denote by $\mart$ the space of bounded martingales over~$\mathscr W$, and by $\prog$ the space of bounded progressive processes over $\msc W$. 
By \cite{bovier2009aizenman, auffinger2015parisi, jagannath2016dynamic}, the quantity $\Phi_\mu(0,h)$ defined in \eqref{e.def.phimu}-\eqref{e.def.phimu.init} admits the variational representation
\begin{multline}
\label{e.phimu.var}
\Phi_\mu(0,h) = \sup_{\alpha\in\prog} \E \left[
\phi\left(\int_{0}^{1}\xi''(t)\mu[0,t]\alpha_t \, \d t+\int_0^1\sqrt{\xi''(t)}\, \d W_t\right)\right.\\
 - \Ll.\frac{1}{2}\int_{0}^{1}\xi''(t)\mu[0,t]\alpha^{2}_t\, \d t\right],
\end{multline}
and as a consequence, the Parisi formula \eqref{e.parisi.formula} can be rewritten as
\begin{multline}
\label{e.aufche}
f =  \inf_{\mu\in\Pr([0,1])}\sup_{\alpha\in\prog} \E \left[
\phi\left(\int_{0}^{1}\xi''(t)\mu[0,t]\alpha_t \, \d t+\int_0^1\sqrt{\xi''(t)}\, \d W_t\right)\right.\\
 - \Ll.\frac{1}{2}\int_{0}^{1}\xi''(t)\mu[0,t]\left(\alpha^{2}_t+t\right)\, \d t\right].
\end{multline}
The supremum in \eqref{e.phimu.var} can be interpreted as a stochastic control problem. Using this interpretation, it was shown in \cite{bovier2009aizenman, auffinger2015parisi, jagannath2016dynamic} that the supremum in~\eqref{e.phimu.var} is achieved, and an explicit description of the unique maximizer was given. It is relatively classical to verify that these results remain valid even if we enlarge the probability space and possibly allow for the control $\alpha$ to depend on additional randomness. Notice however that it is less classical to verify that the supremum on the right side of \eqref{e.main} does not depend on the choice of probability space, or to verify that the supremum is achieved, so it will be important for our purposes to be careful about such aspects. In the next lemma, we therefore restate the identity \eqref{e.phimu.var} in the generic probability space $\msc P$ (not necessarily the Wiener space $\msc W$), and describe precisely the identity of the maximizer. We denote by $\bprog$ the space of bounded progressive processes over the probability space $\msc P$.

\begin{lemma}[variational representation of $\Phi_\mu$ \cite{bovier2009aizenman, auffinger2015parisi, jagannath2016dynamic}]
\label{l.variat.phimu}
For every $\mu \in \Pr([0,1])$, we have
\begin{multline}
\label{e.variat.phimu}
\Phi_\mu(0,h) = \sup_{\alpha\in\bprog} \EE \left[
\phi\left(\int_{0}^{1}\xi''(t)\mu[0,t]\alpha_t \, \d t+\int_0^1\sqrt{\xi''(t)}\, \d W_t\right)\right.\\
 - \Ll.\frac{1}{2}\int_{0}^{1}\xi''(t)\mu[0,t]\alpha^{2}_t\, \d t\right].
\end{multline}
Moreover, let  $(X_t)_{t \in [0,1]}$ be the strong solution to 
\begin{equation}  
\label{e.AC.SDE}
\Ll\{
\begin{array}{ll}  %\label{}
X_0 = h, \\
\d X_t = \xi''(t) \mu[0,t] \dr_x \Phi_{\mu}(t, X_t) \, \d t + \sqrt{\xi''(t)} \, \d W_t.
\end{array}
\Rr.
\end{equation}
A stochastic process $\al \in \bprog$ achieves the supremum in \eqref{e.variat.phimu} if and only if it satisfies, for almost every $t \in [0,1]$ with $\mu[0,t] > 0$, 
\begin{equation}  
\label{e.identity.alpha}
\al_t = \dr_x \Phi_{\mu}(t,X_t) = \dr_x \Phi_{\mu}(0,h) +  \int_0^t \sqrt{\xi''(s)} \dr_x^2 \Phi_{\mu}(s,X_s) \, \d W_s.
\end{equation}
\end{lemma}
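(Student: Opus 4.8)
The plan is to prove Lemma~\ref{l.variat.phimu} by the classical verification argument for the stochastic control problem underlying~\eqref{e.variat.phimu}: apply It\^o's formula to $\Phi_\mu(t,X_t)$ along the controlled diffusion, substitute the Parisi equation~\eqref{e.def.phimu}, and complete the square, taking care that every step is valid over the generic filtered space $\msc P$ and not only over $\msc W$. The only external inputs I would invoke from \cite{bovier2009aizenman, auffinger2015parisi, jagannath2016dynamic} are the regularity properties of the solution of \eqref{e.def.phimu}--\eqref{e.def.phimu.init}: the function $\Phi_\mu$ is smooth enough to apply It\^o's formula, it satisfies $|\dr_x \Phi_\mu| \le 1$, and $\dr_x \Phi_\mu$ is Lipschitz in $x$ uniformly in $t \in [0,1]$ (equivalently, $\dr_x^2 \Phi_\mu$ is bounded). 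Everything else is bookkeeping. Fixing $\mu \in \Pr([0,1])$ and $\al \in \bprog$, the process $X^\al_t := h + \int_0^t \xi''(s)\mu[0,s]\al_s \, \d s + \int_0^t \sqrt{\xi''(s)}\,\d W_s$ is well defined and adapted — there is no feedback, so no well-posedness issue arises at this stage — with $\d \langle X^\al \rangle_t = \xi''(t)\,\d t$.

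Applying It\^o's formula to $\Phi_\mu(t, X^\al_t)$ and using \eqref{e.def.phimu} to replace $\dr_t \Phi_\mu + \tfrac12 \xi'' \dr_x^2 \Phi_\mu$ by $-\tfrac12 \xi'' \mu[0,t](\dr_x \Phi_\mu)^2$, then completing the square in the drift via $-p^2 + 2ap = a^2 - (p-a)^2$, gives
\[
\d \Phi_\mu(t, X^\al_t) = \tfrac{\xi''(t)}{2}\mu[0,t]\Ll( \al_t^2 - \big(\dr_x \Phi_\mu(t,X^\al_t) - \al_t\big)^2 \Rr)\,\d t + \sqrt{\xi''(t)}\,\dr_x \Phi_\mu(t,X^\al_t)\,\d W_t .
\]
Since $\dr_x \Phi_\mu$ and $\al$ are bounded, the stochastic integral is a true martingale (and $X^\al_1$ has Gaussian tails, so $\phi(X^\al_1 - h) = \Phi_\mu(1, X^\al_1)$ is integrable); integrating over $[0,1]$ and taking $\EE$ yields the fundamental identity
\begin{multline*}
\Phi_\mu(0,h) = \EE\Ll[ \phi\Big(\int_0^1 \xi''(t)\mu[0,t]\al_t\,\d t + \int_0^1 \sqrt{\xi''(t)}\,\d W_t\Big) - \tfrac12\int_0^1 \xi''(t)\mu[0,t]\al_t^2\,\d t \Rr] \\ + \tfrac12\EE\Ll[ \int_0^1 \xi''(t)\mu[0,t]\big(\dr_x \Phi_\mu(t,X^\al_t) - \al_t\big)^2\,\d t \Rr] .
\end{multline*}
The last expectation is nonnegative, which yields the inequality ``$\ge$'' in \eqref{e.variat.phimu} for every $\al \in \bprog$; moreover equality holds for a given $\al$ if and only if this expectation vanishes, that is (using $\xi'' > 0$ on $(0,1]$, which holds unless $\xi \equiv 0$ in which case the statement is trivial) if and only if $\al_t = \dr_x \Phi_\mu(t, X^\al_t)$ for almost every $t$ with $\mu[0,t] > 0$, $\PP$-almost surely.

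To close the argument I would exhibit one maximizer. The drift $(t,x) \mapsto \xi''(t)\mu[0,t]\dr_x \Phi_\mu(t,x)$ is bounded and Lipschitz in $x$ and the diffusion coefficient $\sqrt{\xi''(t)}$ is deterministic, so \eqref{e.AC.SDE} has a unique strong solution $X$, adapted to the filtration of $W$; then $\bar\al_t := \dr_x \Phi_\mu(t, X_t)$ is bounded and progressive with $X^{\bar\al} = X$, so the last expectation above vanishes and the supremum in \eqref{e.variat.phimu} equals $\Phi_\mu(0,h)$ and is attained at $\bar\al$. Conversely, if $\al$ maximizes, the identity $\al_t = \dr_x \Phi_\mu(t, X^\al_t)$ (a.e. $t$ with $\mu[0,t]>0$) forces $X^\al$ to solve \eqref{e.AC.SDE}, hence $X^\al = X$ by uniqueness, which is the first equality in \eqref{e.identity.alpha}. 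The second equality follows by differentiating \eqref{e.def.phimu} in $x$: the function $v := \dr_x \Phi_\mu$ solves $-\dr_t v = \tfrac12 \xi'' \dr_x^2 v + \xi'' \mu[0,t]\, v\, \dr_x v$, so applying It\^o's formula to $v(t, X_t)$ along the feedback solution cancels the drift and leaves $\d v(t, X_t) = \sqrt{\xi''(t)}\,\dr_x^2 \Phi_\mu(t,X_t)\,\d W_t$. Finally, the verification bound ``$\ge$'' holds over $\bprog$ on any admissible $\msc P$, whereas $\bar\al$ lives in the sub-filtration generated by $W$ and is therefore admissible over every such $\msc P$; hence the value of the supremum does not depend on the choice of probability space.

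The only genuinely delicate inputs are the regularity facts quoted at the outset — the boundedness of $\dr_x \Phi_\mu$, needed to upgrade the local martingale to a true martingale and to secure integrability, and its Lipschitz continuity in $x$, needed for the strong well-posedness of the feedback SDE~\eqref{e.AC.SDE} — both of which I would import from \cite{auffinger2015parisi, jagannath2016dynamic}. No subtlety specific to the generic space $\msc P$ arises for this lemma; the subtleties flagged in the introduction concern \eqref{e.main}, not \eqref{e.variat.phimu}.
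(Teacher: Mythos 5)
Your proof is correct and uses the same verification strategy as the paper: apply It\^o's formula to $\Phi_\mu(t, X^\alpha_t)$ along the open-loop controlled path, substitute the Parisi PDE, and bound or cancel the drift. The paper keeps the pointwise supremum over controls in inequality form (the displayed $\sup_a$ inside \eqref{e.ineq.control}) and appeals to \cite{auffinger2015parisi, jagannath2016dynamic} for the equality and achievability over the Wiener space, so the in-text argument supplies only the inequality $\Phi_\mu(0,h)\ge\cdots$ for the generic space $\msc P$, with a one-line remark that necessity of \eqref{e.identity.alpha} follows from examining the case of equality. You instead complete the square, which yields an exact identity with a nonnegative penalty $\frac12\EE\int_0^1\xi''\mu[0,t](\dr_x\Phi_\mu(t,X^\al_t)-\al_t)^2\,\d t$; this simultaneously delivers the upper bound, the achievability (by solving the feedback SDE \eqref{e.AC.SDE}, whose strong well-posedness you correctly justify from the Lipschitz drift), and the if-and-only-if characterization of optimizers, so your version is fully self-contained modulo the regularity of $\Phi_\mu$. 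At the core the two arguments are the same --- the remainder term is exactly the slack in the paper's supremum step --- but you get more out of it in one stroke. One small point worth flagging: for the second equality in \eqref{e.identity.alpha} you apply It\^o's formula to $v=\dr_x\Phi_\mu$, which needs $v\in C^{1,2}$ (i.e.\ control on $\dr_x^3\Phi_\mu$); this is slightly more than the two regularity facts you list explicitly at the outset, though it is indeed provided by the regularity theorem of \cite{jagannath2016dynamic} that the paper invokes.
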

\begin{proof}
We learn from \cite[Theorem~3]{auffinger2015parisi} or \cite[Lemma~18]{jagannath2016dynamic} that the representation~\eqref{e.variat.phimu} is valid if we replace $\bprog$ by the space of processes that are progressively measurable with respect to the $\sigma$-algebra generated by the Brownian motion $W$; or equivalently, if the probability space $\msc P$ is the canonical Wiener space $\msc W$. An examination of the proofs given there in fact also yields the identity \eqref{e.variat.phimu} in the more general context considered here. For the reader's convenience, we briefly justify this here as well. Since the supremum in \eqref{e.variat.phimu} is over a potentially larger set than the one considered in \cite{auffinger2015parisi, jagannath2016dynamic}, 
it suffices to show that for every $\al \in \bprog$, we have
\begin{multline}
\label{e.parisi.bcontrol}
\Phi_\mu(0,h) \ge   \EE \Ll[  \phi\left(\int_{0}^{1}\xi''(t)\mu[0,t]\alpha_t \, \d t 
    + \int_0^1\sqrt{\xi''(t)}\, \d W_t\right) \Rr.
    \\
    \Ll. - \frac{1}{2}\int_{0}^{1}\xi''(t)\mu[0,t] \alpha^{2}_t\, \d t
    \Rr] .
\end{multline}
We therefore fix $\al \in \bprog$ and for every $t \in [0,1]$, set
\begin{equation*}  %\label{e.}
Y_t := h + \int_{0}^{t} \xi''(s)\mu[0,s]\alpha_s \, \d s 
    + \int_0^t \sqrt{\xi''(s)}\, \d W_s .
\end{equation*}
Using the regularity properties of $\Phi_\mu$ shown in \cite[Theorem~4]{jagannath2016dynamic}, we can apply It\^o's formula to get that 
\begin{multline*}  %\label{e.}
\Phi_\mu(1,Y_1) = \Phi_\mu(0,h) + \int_0^1 \dr_t \Phi_\mu(t,Y_t) \, \d t + \int_0^1 \dr_x \Phi_\mu(t,Y_t) \, \d Y_t 
\\
+ \frac 1 2 \int_0^1 \dr_x^2 \Phi_\mu(t,Y_t) \, \d \langle Y \rangle_t.
\end{multline*}
Taking the expectation, we obtain that 
\begin{multline*}  %\label{e.}
\EE \big[ \Phi_\mu(1,Y_1) \big] = \Phi_\mu(0,h) + \EE \int_0^1 \Big( \dr_t \Phi_\mu(t,Y_t) 
\\
 + \frac{\xi''(t)}{2}\Ll(2\mu[0,t] \al_t \dr_x \Phi_\mu (t,Y_t)   + \dr_x^2 \Phi_\mu(t,Y_t)\Rr)\Big) \, \d t.
\end{multline*}
By \eqref{e.def.phimu} and \cite[Theorem~1]{jagannath2016dynamic}, for almost every $t \in [0,1]$, we have for every $x \in \R$ that 
\begin{align}  %\label{e.}
\notag
-\dr_t \Phi_\mu(t,x) 
& =  \frac{\xi''(t)}{2} \Ll( \dr_x^2 \Phi_\mu(t,x) + \mu[0,t] \big(\dr_x \Phi_\mu(t,x)\big)^2 \Rr) 
\\
\notag
& = \frac{\xi''(t)}{2}\sup_{a \in \R} \Ll( \dr_x^2 \Phi_\mu(t,x) + 2 \mu[0,t] a \dr_x \Phi_\mu(t,x) -   \mu[0,t]a^2  \Rr) 
\\
\label{e.ineq.control}
& \ge \frac{\xi''(t)}{2}\Ll( \dr_x^2 \Phi_\mu(t,x) + 2  \mu[0,t]\al_t \dr_x \Phi_\mu(t,x) -  \mu[0,t]\al_t^2   \Rr) .
\end{align}
Combining this with the previous display, we infer that 
\begin{equation*}  %\label{e.}
\EE \big[ \Phi_\mu(1,Y_1) \big] \le \Phi_\mu(0,h) + \frac 1 2  \int_0^1\xi''(t) \mu[0,t] \EE[\al_t^2] \, \d t.
\end{equation*}
In view of \eqref{e.def.phimu.init} and \eqref{e.def.small.phi}, this is \eqref{e.parisi.bcontrol}, so the proof of \eqref{e.variat.phimu} is complete.

The second identity in \eqref{e.identity.alpha} is a consequence of It\^o's formula and the regularity properties of $\Phi_\mu$ given in \cite[Theorem~4]{jagannath2016dynamic}. The results of \cite[Theorem~3]{auffinger2015parisi} and \cite[Lemma~18]{jagannath2016dynamic} only state that \eqref{e.identity.alpha} is a sufficient condition for optimality, but again the proofs in fact yield that they are also necessary. This is also apparent from the argument given above, by an examination of the case of equality in \eqref{e.ineq.control}. 
\end{proof}

One key point of the representation in \eqref{e.variat.phimu} is that it is manifestly convex in $\mu$. In our next step, we rewrite this convex function as a supremum of affine functions.
\begin{lemma}[Supremum of affine functionals]
\label{l.prog.to.mart}
For every $\mu \in \Pr([0,1])$, we have
\begin{multline}  
\label{e.prog.to.mart}
\Phi_\mu(0,h) 
\\
= \sup_{\al \in \bmart} \EE \Ll[ \frac{1}{2}\int_{0}^{1}\xi''(t)\mu[0,t] \alpha^{2}_t\, \d t + \al_1 \int_0^1 \sqrt{\xi''(t)}\, \d W_t - \phi^*(\al_1)\Rr] .
\end{multline}
Moreover, there exists a unique maximizer to the variational problem in~\eqref{e.prog.to.mart}. Letting $(X_t)_{t \in [0,1]}$ be the solution to~\eqref{e.AC.SDE}, this unique maximizer $\al \in \bmart$ must satisfy~\eqref{e.identity.alpha} for every $t \in [0,1]$. 
\end{lemma}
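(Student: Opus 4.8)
The plan is to derive \eqref{e.prog.to.mart} from the representation \eqref{e.variat.phimu} of Lemma~\ref{l.variat.phimu} by dualizing the outer nonlinearity $\phi$, and then to identify the optimizer by following the cases of equality. The function $\phi$ from \eqref{e.def.small.phi} is convex and finite on $\R$, so $\phi = \phi^{**}$, and for every $z \in \R$ the supremum $\phi(z) = \sup_{\lambda \in \R}(\lambda z - \phi^*(\lambda))$ is attained uniquely at $\lambda = \tanh(z+h) \in (-1,1)$, where $\phi^*$ is strictly convex. Applying this pointwise and integrating, one obtains, for every integrable random variable $Z$ on $\msc P$, the identity $\EE[\phi(Z)] = \sup\{\EE[\lambda Z - \phi^*(\lambda)]\}$, the supremum ranging over $\mcl F_1$-measurable $\lambda$ with $|\lambda| \le 1$ (the only ones with a finite value, on which $\phi^*$ is bounded, so all expectations are well defined) and attained at $\lambda = \tanh(Z+h)$. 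Finally, such $\lambda$ are exactly the terminal values of bounded martingales $\al \in \bmart$ via $\al_t = \EE[\lambda \mid \mcl F_t]$, so the supremum may equally be taken over $\al \in \bmart$, with $Z$ paired against $\al_1$.

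Write $M_t := \int_0^t \sqrt{\xi''(s)}\,\d W_s$, so that for $\beta \in \bprog$ the argument of $\phi$ in \eqref{e.variat.phimu} is $Z_\beta := \int_0^1 \xi''(t)\mu[0,t]\beta_t\,\d t + M_1$, which is integrable. Since $\beta$ is progressive, the tower property gives $\EE[\al_1 \beta_t] = \EE[\al_t \beta_t]$, hence by Fubini $\EE[\al_1 \int_0^1 \xi''(t)\mu[0,t]\beta_t\,\d t] = \EE\int_0^1 \xi''(t)\mu[0,t]\al_t\beta_t\,\d t$. Substituting the dual formula for $\EE[\phi(Z_\beta)]$ into \eqref{e.variat.phimu} then rewrites it as
\begin{equation*}
\Phi_\mu(0,h) = \sup_{\beta \in \bprog}\ \sup_{\al \in \bmart}\ \EE\Ll[ \int_0^1 \xi''(t)\mu[0,t]\Ll(\al_t\beta_t - \tfrac12\beta_t^2\Rr)\,\d t + \al_1 M_1 - \phi^*(\al_1) \Rr].
\end{equation*}
Exchanging the two suprema and fixing $\al \in \bmart \subseteq \bprog$, the integrand is maximized over $\beta$ pointwise in $(t,\omega)$ by the admissible choice $\beta_t = \al_t$ (on $\{\mu[0,t]>0\}$; elsewhere its value is irrelevant), so the inner supremum equals $\EE[\tfrac12\int_0^1 \xi''(t)\mu[0,t]\al_t^2\,\d t + \al_1 M_1 - \phi^*(\al_1)]$, which is exactly the functional in \eqref{e.prog.to.mart}. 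This proves the identity.

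For the optimizer, let $(X_t)$ solve \eqref{e.AC.SDE} and set $\gamma_t := \dr_x\Phi_\mu(t,X_t)$. By \eqref{e.identity.alpha} this is a stochastic integral, hence a local martingale, and it is bounded since $|\dr_x\Phi_\mu| \le 1$, so $\gamma \in \bmart$; moreover, by Lemma~\ref{l.variat.phimu}, $\gamma$ maximizes the $\bprog$-functional in \eqref{e.variat.phimu}. Since $Z_\gamma = X_1 - h$ and $\gamma_1 = \dr_x\Phi_\mu(1,X_1) = \tanh(X_1)$, we have $\gamma_1 = \tanh(Z_\gamma + h)$, so the Fenchel inequality $\gamma_1 Z_\gamma - \phi^*(\gamma_1) \le \phi(Z_\gamma)$ is an equality; combined with $\EE[\gamma_1 Z_\gamma] = \EE\int_0^1 \xi''(t)\mu[0,t]\gamma_t^2\,\d t + \EE[\gamma_1 M_1]$, this shows the functional of \eqref{e.prog.to.mart} at $\gamma$ equals the $\bprog$-functional of \eqref{e.variat.phimu} at $\gamma$, namely $\Phi_\mu(0,h)$, so $\gamma$ is a maximizer of \eqref{e.prog.to.mart}. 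Conversely, for any $\al \in \bmart$, the Fenchel inequality $\al_1 Z_\al - \phi^*(\al_1) \le \phi(Z_\al)$ together with $\EE[\al_1 Z_\al] = \EE\int_0^1 \xi''(t)\mu[0,t]\al_t^2\,\d t + \EE[\al_1 M_1]$ shows that the functional of \eqref{e.prog.to.mart} at $\al$ is at most the $\bprog$-functional of \eqref{e.variat.phimu} at $\al$, hence at most $\Phi_\mu(0,h)$; if $\al$ maximizes \eqref{e.prog.to.mart}, these inequalities are equalities, so $\al$ maximizes the $\bprog$-functional, and Lemma~\ref{l.variat.phimu} gives $\al_t = \gamma_t$ for almost every $t$ with $\mu[0,t]>0$.

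It remains to upgrade this to an identity at every $t$. The set $\{t : \mu[0,t]>0\}$ contains the interval $(a,1]$ with $a := \inf\supp\mu$ and has Lebesgue-null complement in $[a,1]$; since the right-continuous modifications of the martingales $\al$ and $\gamma$ agree almost everywhere on this interval, right-continuity forces $\al_t = \gamma_t$ for all $t \in [a,1]$, whence also $\al_t = \EE[\al_a \mid \mcl F_t] = \EE[\gamma_a \mid \mcl F_t] = \gamma_t$ for $t < a$, so $\al = \gamma$. I expect this last point to be the main obstacle: Lemma~\ref{l.variat.phimu} only determines a candidate for almost every $t$ with $\mu[0,t]>0$, and one has to combine the path-regularity of martingales with the interval structure of $\{\mu[0,t]>0\}$ to obtain an identity valid for every $t$, which is precisely what turns optimality into uniqueness in $\bmart$.
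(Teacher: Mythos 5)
Your derivation of the identity \eqref{e.prog.to.mart} is correct and follows essentially the same route as the paper: dualize $\phi$ by biconjugation, use the fact that terminal values of bounded martingales with $|\al_1|\le 1$ are exactly the admissible dual variables $\lambda$, and use the tower property to express $\EE[\al_1 Z_\beta]$ as an integral against $\al_t$. Your explicit pointwise maximization over $\beta_t$ is a neat way to collapse the double supremum; the paper achieves the same reduction by noting that the double supremum over $(\al,\lambda)\in\bprog\times L^\infty$ is attained at a pair satisfying $\al\in\bmart$ and $\lambda=\al_1$, so the constrained supremum has the same value. Your argument that $\gamma_t:=\dr_x\Phi_\mu(t,X_t)$ is a bounded martingale achieving the value $\Phi_\mu(0,h)$ is also fine.

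The gap is in the final paragraph, where you pass from ``$\al_t=\gamma_t$ for a.e.\ $t$ with $\mu[0,t]>0$'' to ``$\al=\gamma$''. First, if $\mu=\delta_1$, then $\{t:\mu[0,t]>0\}=\{1\}$ is Lebesgue-null, the a.e.\ condition furnished by Lemma~\ref{l.variat.phimu} is vacuous, $a=\inf\supp\mu=1$, and your right-continuity step proves nothing. Second, even when $a<1$, right-continuity only yields $\al_t=\gamma_t$ for $t\in[a,1)$; it does not give the endpoint $t=1$, and on the general filtered space $\msc P$ of the lemma one cannot rule out that a bounded martingale jumps at the terminal time (there is no assumption that $\F_{1^-}=\F_1$). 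In both situations you never actually conclude $\al_1=\gamma_1$, which is what the martingale property would then propagate backwards.

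The fix is already in your hands, and it is what the paper does: you proved that optimality forces equality in the Fenchel inequality, i.e.\ $\al_1=\phi'(Z_\al)$ a.s.; and since $\al_t=\gamma_t$ for a.e.\ $t$ with $\mu[0,t]>0$ (together with $\mu[0,t]\al_t=0=\mu[0,t]\gamma_t$ elsewhere) one has $Z_\al=Z_\gamma$ outright, with no boundary subtleties and covering $\mu=\delta_1$ trivially. Hence $\al_1=\phi'(Z_\gamma)=\gamma_1$, and then $\al_t=\EE[\al_1\mid\F_t]=\EE[\gamma_1\mid\F_t]=\gamma_t$ for every $t$. Use this instead of the right-continuity argument.
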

\begin{proof}
We decompose the proof into two steps.

\medskip

\noindent \emph{Step 1}. Let $X \in L^2(\msc P)$. In this step, we show that
\begin{equation}
\label{e.measurable.selection}
\EE [\phi(X)] = \sup_{\lambda \in L^\infty(\msc P)} \EE \Ll[ \lambda X - \phi^*(\lambda) \Rr] ,
\end{equation}
and that the supremum in \eqref{e.measurable.selection} is achieved at $\lambda \in L^\infty(\msc P)$ if and only if $\lambda = \phi'(X)$. 

By the biconjugation theorem, we have that for every $x \in \R$,
\begin{equation}  
\label{e.biconj.phi}
\phi(x) = \sup_{\lambda \in \R} \Ll( \lambda x - \phi^*(\lambda) \Rr) .
\end{equation}
In particular, we have for every $x, \lambda \in \R$ that
\begin{equation}  
\label{e.ineq.convex.dual}
\phi(x) + \phi^*(\lambda) \ge \lambda x,
\end{equation}
and thus
\begin{equation*}  %\label{e.}
\E [\phi(X)] \ge \sup_{\lambda \in L^\infty(\msc W)} \E \Ll[ \lambda X - \phi^*(\lambda) \Rr].
\end{equation*}
As a supremum of convex and lower semi-continuous functions, the function~$\phi^*$ is convex and lower semi-continuous. Since $\phi^*$ is in fact strictly convex on its effective domain $[-1,1]$, for each $x \in \R$, there exists a unique $\lambda \in \R$ such that 
\begin{equation}
\label{e.identity.convex.dual}
\phi(x) + \phi^*(\lambda) = \lambda x. 
\end{equation}
Since the derivative of $\phi^*$ diverges at the endpoints of the interval $[-1,1]$, this unique $\lambda$ must lie in the open interval $(-1,1)$. Similarly, for each $\lambda \in (-1,1)$, there exists a unique $x \in \R$ such that the identity \eqref{e.identity.convex.dual} is realized, and recall that we always have \eqref{e.ineq.convex.dual} in general. In short, the condition \eqref{e.identity.convex.dual} is equivalent to the statement that $\phi'(x) = \lambda$, and in particular, for every $x \in \R$,
\begin{equation*}  %\label{e.}
\phi(x) = \phi'(x) x - \phi^*(\phi'(x)). 
\end{equation*}
This yields \eqref{e.measurable.selection} and the fact that the unique maximizer is $\lambda = \phi'(X)$. 

\medskip

\noindent \emph{Step 2.}
We let $(X_t)$ be the solution to~\eqref{e.AC.SDE}, and for every $t \in [0,1]$, we let
\begin{equation*}  %\label{e.}
\hat \al_t := \dr_x \Phi_{\mu}(t,X_t) = \dr_x \Phi_{\mu}(0,h) +  \int_0^t \sqrt{\xi''(s)} \dr_x^2 \Phi_{\mu}(s,X_s) \, \d W_s.
\end{equation*}
Using \eqref{e.def.phimu.init} and \eqref{e.def.small.phi}, we notice that
\begin{equation}  
\label{e.observe.hatal1}
\hat \al_1 = \phi'(X_1 - h) = \phi' \Ll( \int_{0}^{1}\xi''(t)\mu[0,t]\hat \alpha_t \, \d t 
    + \int_0^1\sqrt{\xi''(t)}\, \d W_t \Rr).
\end{equation}
By the result of the previous step and Lemma~\ref{l.variat.phimu}, we can rewrite $\Phi_\mu(0,h)$ in the form of
\begin{multline}  
\label{e.expand}
 \sup_{\alpha\in\bprog} \sup_{\lambda \in {L^\infty(\mathscr P)}} \EE \left[
\lambda \left(\int_{0}^{1}\xi''(t)\mu[0,t]\alpha_t \, \d t 
    + \int_0^1\sqrt{\xi''(t)}\, \d W_t\right)
\right.
\\
\left.  - \phi^*(\lambda) -\frac{1}{2}\int_{0}^{1}\xi''(t)\mu[0,t] \alpha^{2}_t\, \d t\right] .
\end{multline}
Moreover, the choice of $\alpha = \hat \al$ and $\lambda = \hat \al_1$ realizes the supremum. Also, by Lemma~\ref{l.variat.phimu}, the random variable
\begin{equation*}  %\label{e.}
 \int_{0}^{1}\xi''(t)\mu[0,t] \alpha_t \, \d t 
    + \int_0^1\sqrt{\xi''(t)}\, \d W_t
\end{equation*}
does not depend on the choice of optimizer $\al$ in \eqref{e.expand}. The result of the previous step and \eqref{e.observe.hatal1} therefore also yield that the choice of $\lambda = \hat \alpha_1$ is necessary for any maximizing pair $(\alpha, \lambda)$ of \eqref{e.expand}. If we also impose the maximizing pair to be such that $\alpha \in \bmart$ and $\al_1 = \lambda$, then by the martingale property this yields that $\al = \hat \al$. In the case when these additional constraints are imposed, the functional we optimize over in \eqref{e.expand} reduces to that in~\eqref{e.prog.to.mart}, so the proof is complete.
\end{proof}
By \eqref{e.parisi.formula} and Lemma~\ref{l.prog.to.mart}, this already shows that 
\begin{equation}  
\label{e.f.inf.sup}
f =  \inf_{\mu \in \Pr([0,1])} \, \sup_{\al \in \bmart} \Gamma(\mu,\alpha).
\end{equation}
Since the minimization problem over $\mu$ is in fact the same as that in \eqref{e.parisi.formula}, it is achieved at the same optimal $\mu$ and only there, by \cite{auffinger2015parisi}. Moreover, once $\mu$ is fixed, the maximization over $\al$ is achieved at the optimal $\al$ described by Lemma~\ref{l.prog.to.mart} and only there.  

Our next goal is to justify the interversion of the infimum and supremum in \eqref{e.f.inf.sup}.
For every bounded measurable $f : [0,1] \to \R$, we observe the integration by parts
\begin{multline}  
\label{e.ibp}
\int_0^1 f(s) \mu[0,s] \, \d s 
= \int_0^1 f(s)\int  \1_{\{t \le s\}} \, \d \mu(t) \, \d s 
\\
= \int \!\!\int_t^1 f(s) \, \d s \, \d \mu(t).
\end{multline}
It will be convenient to use this observation and rewrite \eqref{e.f.inf.sup} as
\begin{multline}  
\label{e.step0}
f = \inf_{\mu\in\Pr([0,1])} \sup_{\al \in \bmart} \EE \Ll[ \al_1 \int_0^1 \sqrt{\xi''(t)}\, \d W_t - \phi^*(\al_1) \Rr.
\\
+ \Ll. \frac{1}{2}\int \!\! \int_t^1  \xi''(s) (\alpha^{2}_s - s) \, \d s \, \d \mu(t)\Rr] .
\end{multline}
%Since $\phi^*$ is infinite outside of $[-1,1]$, we may as well restrict the supremum to the set $\mart_1$ of $\al \in \mart$ that are uniformly bounded by $1$. 
\begin{lemma}[Interchanging inf and sup]
\label{l.interchange}
Let
\begin{multline}  
\label{e.def.K0}
\msf K_0 := \bigg\{ \Ll(\EE \Ll[  \al_1 \int_0^1 \sqrt{\xi''(t)}\, \d W_t - \phi^*(\al_1)\Rr] ,(\EE[\al_t^2])_{t \in [0,1]} \Rr)\ \big\mid 
\\  \al \in \bmart, \ \EE[\phi^*(\al_1)] < +\infty \bigg\},
\end{multline}
and let $\msf K$ be the closure of the convex hull of $\msf K_0$ with respect to the topology of the space $\R \times L^1([0,1])$. 
The limit free energy \eqref{e.def.f} is given by
\begin{equation}  
\label{e.interchange}
f = \sup_{(\chi, \gamma) \in \msf K} \bigg\{\chi+ \frac 1 2 \inf_{t \in [0,1]} \int_t^1 \xi''(s) (\ga_s - s) \, \d s\bigg\}.
\end{equation}
\end{lemma}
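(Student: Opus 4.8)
\emph{Proof plan.} I start from the representation~\eqref{e.step0}. First, if $\al \in \bmart$ satisfies $\PP(\al_1 \notin [-1,1]) > 0$, then, since $\phi^*$ is bounded on $[-1,1]$, bounded below on $\R$, and identically $+\infty$ off $[-1,1]$, while all the remaining terms in~\eqref{e.step0} are integrable, the bracketed expectation equals $-\infty$; hence the supremum in~\eqref{e.step0} may be restricted to those $\al \in \bmart$ with $\EE[\phi^*(\al_1)] < +\infty$, equivalently with $|\al_1| \le 1$ a.s. For such $\al$, the identity $\al_t = \EE[\al_1 \mid \F_t]$ gives $|\al_t| \le 1$ a.s.\ for all $t$, so by Jensen's inequality $\gamma_t := \EE[\al_t^2]$ defines a nondecreasing map $[0,1]\to[0,1]$. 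Taking expectations in~\eqref{e.step0} (now all terms are integrable, so Fubini applies) and applying the integration by parts~\eqref{e.ibp} to the $\mu$-term, the bracketed quantity becomes
\[
H\big(\mu,(\chi,\gamma)\big) := \chi + \frac{1}{2}\int_0^1 G_\gamma(t)\, \d\mu(t), \qquad G_\gamma(t) := \int_t^1 \xi''(s)(\gamma_s - s)\, \d s,
\]
and the pair $(\chi,\gamma) := \big(\EE[\al_1\int_0^1\sqrt{\xi''}\,\d W - \phi^*(\al_1)],\, (\EE[\al_t^2])_{t\in[0,1]}\big)$ ranges exactly over $\msf K_0$ as $\al$ ranges over this restricted set. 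Thus $f = \inf_{\mu \in \Pr([0,1])} \sup_{(\chi,\gamma)\in\msf K_0} H(\mu,(\chi,\gamma))$.

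Next I would establish the compactness needed for a minimax argument. The set $D$ of equivalence classes of nondecreasing functions $[0,1]\to[0,1]$ is convex and, by Helly's selection theorem together with dominated convergence, compact in $L^1([0,1])$; moreover $\chi$ is confined to a fixed bounded interval $[a,b]$ (Cauchy--Schwarz with $|\al_1|\le 1$ bounds $\EE[\al_1\int_0^1\sqrt{\xi''}\,\d W]$ by $\sqrt{\xi'(1)-\xi'(0)}$, and $\phi^*$ is bounded on $[-1,1]$). Hence $\msf K_0 \subseteq [a,b]\times D$; since $[a,b]\times D$ is convex and $L^1$-closed, the closed convex hull $\msf K$ of $\msf K_0$ is a closed subset of it, and so $\msf K$ is itself convex and compact in $\R\times L^1([0,1])$.

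Then I would run the minimax. For fixed $\mu$, the map $(\chi,\gamma)\mapsto H(\mu,(\chi,\gamma))$ is affine and $\R\times L^1$-continuous: by~\eqref{e.ibp} its $\gamma$-dependence is $\gamma \mapsto \frac12\int_0^1 \xi''(s)(\gamma_s - s)\mu[0,s]\,\d s$, a bounded linear functional on $L^1([0,1])$ (the weight $s\mapsto\xi''(s)\mu[0,s]$ being in $L^\infty$). Consequently $\sup_{\msf K_0} H = \sup_{\mathrm{conv}\,\msf K_0} H = \sup_{\msf K} H$. On the other hand, for fixed $(\chi,\gamma)\in\msf K$ we have $\gamma\in L^1([0,1])$ and $\xi''$ continuous, so $G_\gamma$ is continuous on $[0,1]$, whence $\mu\mapsto H(\mu,(\chi,\gamma))$ is affine and weak-$*$ continuous on the convex weak-$*$ compact set $\Pr([0,1])$. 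Sion's minimax theorem (or the classical von Neumann--Fan theorem, since $H$ is affine in each argument) therefore gives $\inf_{\mu}\sup_{\msf K} H = \sup_{\msf K}\inf_{\mu} H$. Finally, continuity of $G_\gamma$ yields $\inf_{\mu\in\Pr([0,1])}\int_0^1 G_\gamma\,\d\mu = \min_{t\in[0,1]} G_\gamma(t)$, attained at a Dirac mass, so $\inf_{\mu} H(\mu,(\chi,\gamma)) = \chi + \frac12 \inf_{t\in[0,1]}\int_t^1 \xi''(s)(\gamma_s - s)\,\d s$, which is precisely~\eqref{e.interchange}.

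The step requiring the most care is this interchange: one must use the a priori bound $|\al_t|\le 1$ and the monotonicity of $t\mapsto\EE[\al_t^2]$ to place $\msf K$ inside an $L^1$-compact set via Helly, and then be careful to match the right topologies --- weak-$*$ on $\Pr([0,1])$, the norm topology on $\R\times L^1([0,1])$ --- when verifying the continuity hypotheses of the minimax theorem and when passing harmlessly from $\msf K_0$ to its closed convex hull.
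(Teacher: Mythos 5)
Your proof is correct and follows essentially the same route as the paper: restrict to martingales with $\al_1 \in [-1,1]$ a.s., reduce to the pair $(\chi,\gamma)$, establish compactness of $\msf K$ via the monotonicity of $t \mapsto \EE[\al_t^2]$, and apply a Sion/Fan-type minimax theorem followed by optimizing $\mu$ at a Dirac mass. The only cosmetic differences are that you invoke Helly's selection theorem and embed $\msf K$ directly in the compact convex set $[a,b]\times D$, where the paper extracts a subsequence over the rationals and cites the closed-convex-hull-of-a-compact-set theorem; both give the same compactness conclusion.
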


\begin{proof}[Proof of Lemma~\ref{l.interchange}]
We can rewrite \eqref{e.step0} in the form of
\begin{equation}
\label{e.step1}
f = \inf_{\mu\in\Pr([0,1])} \sup_{(\chi,\ga) \in \msf K_0} \Ll( \chi + \frac{1}{2}\int \!\! \int_t^1  \xi''(s) (\ga_s - s) \, \d s \, \d \mu(t) \Rr).
\end{equation}
Let us call $G(\mu,\chi,\ga)$ the functional between the large parentheses in \eqref{e.step1}. For each $\mu$, the functional $G(\mu,\cdot, \cdot)$ is affine, so the supremum in \eqref{e.step1} is not changed if we replace $\msf K_0$ by its convex hull. For every $\ga, \rho \in L^1([0,1])$,  
\begin{multline}  
\label{e.continuity.G}
\sup_{t \in [0,1]} \Ll| \int_t^1  \xi''(s) (\ga_s - s) \, \d s - \int_t^1  \xi''(s) (\rho_s - s) \, \d s \Rr| 
\\
\le \xi''(1) \int_0^1 |\ga_s - \rho_s| \, \d s.
\end{multline}
The functional $G(\mu,\cdot,\cdot)$ is therefore continuous for the topology of $\R \times L^1([0,1])$. We deduce that the supremum in \eqref{e.step1} is not changed if we replace $\msf K_0$ by $\msf K$, that is,
\begin{equation}
\label{e.step2}
f = \inf_{\mu\in\Pr([0,1])} \sup_{(\chi,\ga) \in \msf K} \Ll( \chi + \frac{1}{2}\int \!\! \int_t^1  \xi''(s) (\ga_s - s) \, \d s \, \d \mu(t) \Rr).
\end{equation}

We now argue that the set $\msf K_0$ is precompact. First, for every $\al \in \mart$, the condition $\E[\phi^*(\al_1)] < +\infty$ is equivalent to the condition that $\al_1$ takes values in $[-1,1]$ almost surely. As a result, if  $(\chi^{(n)}, \ga^{(n)})$ denotes a sequence of elements of $\msf K_0$, we have that $\chi^{(n)}$ and $\sup_{t \in [0,1]} |\ga^{(n)}(t)|$ are bounded uniformly over $n$. We can therefore find a subsequence along which $\chi^{(n)}$ and $\ga^{(n)}_t$ converge for each $t \in \Q \cap [0,1]$. Using also that for each $n$, the mapping $t \mapsto\ga^{(n)}_t$ is non-decreasing, we deduce that $(\chi^{(n)}, \ga^{(n)})$ converges in $\R \times L^1([0,1])$ along the subsequence. This shows that $\msf K_0$ is precompact, and therefore that its closed convex hull $\msf K$ is compact, by \cite[Theorem~5.35]{aliprantis2006infinite}.

Endowing the space of probability measures $\Pr([0,1])$ with the topology of weak convergence turns this space into a compact set. Using again~\eqref{e.continuity.G}, we see that $G$ is jointly continuous. We have already used that for each fixed~$\mu \in \Pr([0,1])$, the mapping~$G(\mu,\cdot,\cdot)$ is affine; and for each fixed $(\chi,\ga) \in \R \times L^1([0,1])$, the mapping $G(\cdot,\chi,\ga)$ is affine. By the minimax theorem~\cite{fan1953minimax, sion1958minimax}, we can therefore exchange the infimum and the supremum in~\eqref{e.step2} and obtain that 
\begin{equation*}  %\label{e.}
f = \sup_{(\chi,\ga) \in \msf K} \inf_{\mu\in\Pr([0,1])} \Ll( \chi + \frac{1}{2}\int \!\! \int_t^1  \xi''(s) (\ga_s - s) \, \d s \, \d \mu(t) \Rr).
\end{equation*}
The infimum is achieved for measures $\mu$ that are supported on minimizers of the mapping 
\begin{equation*}  %\label{e.}
t \mapsto \int_t^1  \xi''(s) (\ga_s - s) \, \d s.
\end{equation*}
This yields the announced result.
\end{proof}

While Lemma~\ref{l.interchange} does indeed perform some interchange of infimum and supremum, the expression of the optimization as a supremum over the set $\msf K$ is not very satisfactory. The goal of the next lemma is to revert this back to an optimization problem over the space of martingales. The proof of this lemma will be simplified if we assume that the $\sigma$-algebra $\F_0$ of the probability space~$\msc P$ is sufficiently rich, so that we can perform a ``randomization'' operation on the space of martingales, as in the construction of Nash equilibria. We could also show the next lemma directly without this assumption, by showing that the ``randomized'' martingales we use can be approximated arbitrarily closely by martingales that are measurable with respect to the filtration generated by the Brownian motion~$W$. Since we will ultimately show that the supremum is in fact achieved at a martingale that is measurable with respect to the filtration generated by $W$, there is ultimately no loss in making this additional assumption here.
\begin{lemma}
\label{l.back.to.mart}
Suppose that the $\sigma$-algebra $\F_0$ of the probability space $\msc P$ is sufficiently rich that there exists an $\F_0$-measurable random variable that is uniformly distributed over $[0,1]$. Then the identity \eqref{e.main} is valid.
\end{lemma}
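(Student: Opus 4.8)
The plan is to feed the description of $\msf K$ from Lemma~\ref{l.interchange} into a randomization argument of the kind used to build mixed-strategy equilibria. For $\al\in\bmart$ set
\[
\chi(\al):=\EE\Ll[\al_1\int_0^1\sqrt{\xi''(t)}\,\d W_t-\phi^*(\al_1)\Rr]\in\R\cup\{-\infty\},\qquad\ga(\al):=(\EE[\al_t^2])_{t\in[0,1]},
\]
and for $(\chi,\ga)\in\R\times L^1([0,1])$ set
\[
F(\chi,\ga):=\chi+\frac12\inf_{t\in[0,1]}\int_t^1\xi''(s)(\ga_s-s)\,\d s=\chi-\frac12\sup_{t\in[0,1]}\int_t^1\xi''(s)(s-\ga_s)\,\d s,
\]
so that the right-hand side of \eqref{e.main} equals $\sup_{\al\in\bmart}F(\chi(\al),\ga(\al))$, while Lemma~\ref{l.interchange} says $f=\sup_{\msf K}F$. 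By \eqref{e.continuity.G} the map $F$ is Lipschitz for the topology of $\R\times L^1([0,1])$. As a preliminary, note that the constraint $\EE[\phi^*(\al_1)]<+\infty$ in \eqref{e.def.K0} holds exactly when $\al_1\in[-1,1]$ almost surely (as $\phi^*$ is finite precisely on $[-1,1]$), in which case $(\chi(\al),\ga(\al))$ is the generic point of $\msf K_0$ and $F(\chi(\al),\ga(\al))$ is the bracketed quantity in \eqref{e.main}; when the constraint fails, $\chi(\al)=-\infty$ (since $\phi^*$ is bounded below and $\int_0^1\sqrt{\xi''(t)}\,\d W_t$ is integrable), so such $\al$ are irrelevant to \eqref{e.main}. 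Hence $\sup_{\al\in\bmart}F(\chi(\al),\ga(\al))=\sup_{\msf K_0}F$, and as $\msf K_0\subseteq\msf K$ this already gives $f\ge\sup_{\al\in\bmart}F(\chi(\al),\ga(\al))$.

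For the converse it suffices to prove $\sup_{\msf K}F\le\sup_{\msf K_0}F$. Because $\msf K$ is the closed convex hull of $\msf K_0$ and $F$ is continuous, $\sup_{\msf K}F=\sup_{\mathrm{conv}\,\msf K_0}F$, so it is enough to realize each finite convex combination $\sum_{i=1}^n p_i(\chi(\al^{(i)}),\ga(\al^{(i)}))$ of points of $\msf K_0$ in the form $(\chi(\al),\ga(\al))$ for a single $\al\in\bmart$. Here the hypothesis on $\F_0$ enters. First I would reduce to the case where each $\al^{(i)}$ is measurable for the filtration generated by $W$: the proof of Lemma~\ref{l.interchange} uses nothing about the ambient filtered space beyond the existence of a Brownian motion, so run on the canonical Wiener space $\msc W$ it gives $f=\sup_{\msf K^{\msc W}}F$, where $\msf K^{\msc W}$ is the closed convex hull of the analogue $\msf K_0^{\msc W}$ of \eqref{e.def.K0} built from the bounded $W$-measurable martingales — each of which is also a bounded martingale over $\msc P$, since the increments of $W$ are independent of every $\F_s$. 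By continuity of $F$ it then suffices to realize convex combinations of points of $\msf K_0^{\msc W}$ as points of $\msf K_0$ over $\msc P$.

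So let $\al^{(1)},\dots,\al^{(n)}$ be bounded $W$-measurable martingales with $\al^{(i)}_1\in[-1,1]$, let $p_1,\dots,p_n\ge0$ with $\sum_i p_i=1$, let $U$ be an $\F_0$-measurable uniform random variable on $[0,1]$, and let $I$ be a $\{1,\dots,n\}$-valued function of $U$ with $\PP(I=i)=p_i$. Since $W$ has increments independent of $\F_0$, the process $W$, and therefore each $\al^{(i)}$, is independent of $I$. Put $\al:=\al^{(I)}=\sum_i\1_{\{I=i\}}\al^{(i)}$. Then $\al$ is adapted to $(\F_t)$ (each $\{I=i\}\in\F_0$ and each $\al^{(i)}$ is adapted), it is a martingale because $\EE[\al_t\mid\F_s]=\sum_i\1_{\{I=i\}}\EE[\al^{(i)}_t\mid\F_s]=\sum_i\1_{\{I=i\}}\al^{(i)}_s=\al_s$, it is bounded, and $\al_1\in[-1,1]$; and the independence of $I$ from $(\al^{(i)})_{i\le n}$ and $W$ gives $\EE[\al_t^2]=\sum_i p_i\EE[(\al^{(i)}_t)^2]$ for every $t$ and $\chi(\al)=\sum_i p_i\chi(\al^{(i)})$, so $(\chi(\al),\ga(\al))=\sum_i p_i(\chi(\al^{(i)}),\ga(\al^{(i)}))$, which lies in $\msf K_0$ over $\msc P$. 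Chaining the inequalities, $f=\sup_{\msf K^{\msc W}}F=\sup_{\mathrm{conv}\,\msf K_0^{\msc W}}F\le\sup_{\msf K_0}F\le\sup_{\msf K}F=f$, whence $f=\sup_{\msf K_0}F=\sup_{\al\in\bmart}F(\chi(\al),\ga(\al))$, which is \eqref{e.main}.

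I expect the randomization to be the delicate part, precisely the independence it requires: a single $\F_0$-measurable uniform variable need not be independent of a given bounded martingale over $\msc P$, which forces the detour through $\msc W$ (where martingales are $W$-measurable, and $W$ is independent of $\F_0$), and makes essential use of the continuity of $F$ and of $\msf K$ being a \emph{closed} convex hull in order to move between $\msf K$, $\mathrm{conv}\,\msf K_0$, and $\msf K_0$. Everything else reduces to routine bookkeeping with conditional expectations.
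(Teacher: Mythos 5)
Your proof is correct and follows essentially the same route as the paper's: take the easy inequality from Lemma~\ref{l.interchange} over $\msc P$, then re-apply Lemma~\ref{l.interchange} on the canonical Wiener space $\msc W$, use continuity of $F$ in $\R\times L^1([0,1])$ to pass from the closed convex hull to the plain convex hull, and realize each finite convex combination of $W$-measurable martingales as a single martingale over $\msc P$ by conditioning on an $\F_0$-measurable discrete randomizer that is independent of $W$. Your explicit remarks (that $\EE[\phi^*(\al_1)]<+\infty$ forces $\al_1\in[-1,1]$ a.s., and that the detour through $\msc W$ is needed because a general $\al\in\bmart$ need not be independent of an $\F_0$-measurable uniform variable) are accurate and match what the paper's proof uses implicitly.
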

\begin{proof}
Lemma~\ref{l.interchange} clearly implies that 
\begin{multline}  
\label{e.first.bound}
f \ge  \sup_{\alpha \in \bmart} \bigg\{\EE \Ll[  \al_1 \int_0^1 \sqrt{\xi''(t)}\, \d W_t - \phi^*(\al_1)\Rr] 
\\- \frac 1 2 \sup_{t \in [0,1]} \int_t^1 \xi''(s) (s - \EE[\al_s^2 ]) \, \d s\bigg\},
\end{multline}
so we only need to show the converse bound. Notice carefully that the definition of $\msf K_0$ in \eqref{e.def.K0} depends on the identity of the probability space~$\msc P$. For the bound converse to \eqref{e.first.bound}, we will appeal to Lemma~\ref{l.interchange} applied to the case when the underlying probability space is the canonical Wiener space~$\msc W$. We recall that we denote by $\E$ the expectation over $\msc W$, and by $\mart$ the space of bounded martingales over $\msc W$, while we denote by $\bmart$ the space of bounded martingales over $\msc P$. There is a canonical injection from $\mart$ to $\bmart$ given by $\alpha \mapsto \alpha \circ W$; but the space $\bmart$ may be larger than $\mart$ in general. We let $\msf K_0$ be as in \eqref{e.def.K0} but with the underlying probability space being $\msc W$, we let $\msf K_1$ be the convex hull of $\msf K_0$, and $\msf K$ be the closure of~$\msf K_1$. By \eqref{e.continuity.G}, the representation in \eqref{e.interchange} is still valid if we replace $\msf K$ by~$\msf K_1$. Moreover, every element of $(\chi,\ga) \in \msf K_1$ can be represented in the form 
\begin{equation*}  %\label{e.}
(\chi,\ga) =\sum_{i = 1}^n c_i \Ll(\E \Ll[  \al^{(i)}_1 \int_0^1 \sqrt{\xi''(t)}\, \d W_t - \phi^*(\al_1^{(i)})\Rr] ,\big(\E\big[(\al^{(i)}_t)^2\big]\big)_{t \in [0,1]} \Rr),
\end{equation*}
were $c_1,\ldots, c_n \in [0,1]$ are such that $\sum_i c_i = 1$ and $\al^{(1)}, \ldots, \al^{(n)} \in \mart$ take values in $[-1,1]$. We identify the martingales $\al^{(1)}, \ldots, \al^{(n)}$ with elements of $\bmart$ through the injection $\al \mapsto \al \circ W$ mentioned above. Notice that by construction, the martingales $\al^{(1)}, \ldots, \al^{(n)}$ are independent of $\F_0$. 
%Moreover, in order to show \eqref{e.main}, it suffices to show that there exist $\beta \in \bmart$ such that the pair 
%\begin{equation}  
%\label{e.beta.term}
%\Ll(\E \Ll[  \beta_1 \int_0^1 \sqrt{\xi''(t)}\, \d W_t - \phi^*(\beta_1)\Rr] ,\big(\E\big[(\be_t)^2\big]\big)_{t \in [0,1]} \Rr)
%\end{equation}
%approximates $(\chi,\ga)$ arbitrarily closely in the topology of $\R \times L^1([0,1])$. 
Under our assumption on $\msc P$, there exists an $\F_0$-measurable random variable $N$ such that for every $i \in \{1,\ldots, n\}$,
\begin{equation*}  %\label{e.}
\P \Ll[ N = i \Rr] = c_i.
\end{equation*}
For every $t \in [0,1]$, we set
\begin{equation*}  %\label{e.}
\beta_t := \alpha^{(N)}_t.
\end{equation*}
Since $N$ is $\F_0$-measurable and the martingales $\al^{(1)}, \ldots, \al^{(n)}$ are independent of $\F_0$, we see that $\be \in \bmart$, and a direct calculation gives that 
\begin{equation*}  %\label{e.}
\Ll(\EE \Ll[  \beta_1 \int_0^1 \sqrt{\xi''(t)}\, \d W_t - \phi^*(\beta_1)\Rr] ,\big(\EE\big[(\be_t)^2\big]\big)_{t \in [0,1]} \Rr) = (\chi,\ga).
\end{equation*}
So we have shown that every pair $(\chi,\ga) \in \msf K_1$ can be represented in the form above. Since we have also observed that 
\begin{equation*}  %\label{e.}
f = \sup_{(\chi, \gamma) \in \msf K_1} \bigg\{\chi+ \frac 1 2 \inf_{t \in [0,1]} \int_t^1 \xi''(s) (\ga_s - s) \, \d s\bigg\},
\end{equation*}
and recalling also \eqref{e.first.bound}, we obtain the result.
\end{proof}
We continue by showing the existence of a martingale that achieves the supremum in \eqref{e.main}, at the cost of possibly changing the probability space again.

\begin{lemma}[Existence of maximizing martingale]
\label{l.max.mart}
There exists a probability space $\msc P$ such that the identity \eqref{e.main} is valid and the supremum appearing there is achieved.
\end{lemma}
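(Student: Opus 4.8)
The plan is to produce a maximizer by compactness: start from a sequence of near-optimal bounded martingales, extract a weak limit of their laws on a path space in which uniformly bounded martingales are automatically tight, and realize that limit on a Skorokhod space — which will be the enlarged probability space $\msc P$ of the statement. Concretely, fix an auxiliary filtered probability space $\msc P_0$ whose $\sigma$-algebra $\F_0$ carries a random variable uniformly distributed over $[0,1]$ (say the product of the canonical Wiener space with $([0,1],\mathrm{Leb})$), so that \eqref{e.main} holds over $\msc P_0$ by Lemma~\ref{l.back.to.mart}, and pick a maximizing sequence $(\beta^{(n)})_{n\ge1}$ of bounded martingales over $\msc P_0$. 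Discarding the irrelevant value $-\infty$ we may assume $\EE[\phi^*(\beta^{(n)}_1)]<+\infty$, so $\beta^{(n)}_1\in[-1,1]$ and hence $|\beta^{(n)}_t|\le1$ for all $t$. Writing $\chi^{(n)}:=\EE[\beta^{(n)}_1\int_0^1\sqrt{\xi''(t)}\,\d W_t-\phi^*(\beta^{(n)}_1)]$ and $\gamma^{(n)}_t:=\EE[(\beta^{(n)}_t)^2]$, each $\gamma^{(n)}$ is nondecreasing with values in $[0,1]$, so by Helly's selection theorem and the Lipschitz bound \eqref{e.continuity.G} I pass to a subsequence along which $\gamma^{(n)}\to\gamma^*$ in $L^1([0,1])$ and $\chi^{(n)}\to\chi^*$, with
\[ \chi^*-\frac12\sup_{t\in[0,1]}\int_t^1\xi''(s)(s-\gamma^*_s)\,\d s=f. \]

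To pass to the limit, I would use that $\{\beta^{(n)}\}$ is tight in a topology on path space adapted to martingales — for instance the Meyer--Zheng topology of convergence in Lebesgue measure on $[0,1]$, whose tightness criterion only requires control of the conditional variations (which vanish for martingales) and of $\sup_n\EE[|\beta^{(n)}_1|]\le1$. Taking the joint laws of $(W,\beta^{(n)})$ and applying Skorokhod's representation theorem, I realize on a single probability space $\msc P$ an almost surely convergent copy $(W,\beta^{(n)})\to(W,\bar\al)$, where $\bar\al$ takes values in $[-1,1]$ and $\beta^{(n)}_t\to\bar\al_t$ almost surely for almost every $t$. Equipping $\msc P$ with the usual augmentation of the filtration generated by $(W,\bar\al)$: since for each $n$ the increments of $W$ after time $t$ are independent of $(W_s,\beta^{(n)}_s)_{s\le t}$, and independence passes to weak limits, $W$ is still a Brownian motion on $\msc P$; and since $|\beta^{(n)}|\le1$ gives uniform integrability, the martingale property of $\beta^{(n)}$ passes to the limit (at almost every pair of times, hence everywhere after taking a c\`adl\`ag modification), so that $\bar\al\in\bmart$ over $\msc P$.

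It then remains to check that $\bar\al$ realizes $f$. From $\beta^{(n)}_t\to\bar\al_t$ in law and boundedness, $\EE[(\bar\al_t)^2]=\lim_n\gamma^{(n)}_t=\gamma^*_t$ for almost every $t$, whence $\int_t^1\xi''(s)(s-\EE[\bar\al_s^2])\,\d s=\int_t^1\xi''(s)(s-\gamma^*_s)\,\d s$ for all $t$. For the other term one must be slightly careful, because convergence in measure on $[0,1]$ does not control the endpoint $t=1$: for $r<1$ one compares $\chi^{(n)}$ with $\EE[\beta^{(n)}_r\int_0^1\sqrt{\xi''(t)}\,\d W_t-\phi^*(\beta^{(n)}_r)]$, the difference being bounded in terms of $\gamma^{(n)}_1-\gamma^{(n)}_r$ (Cauchy--Schwarz for the linear part, uniform continuity of $\phi^*$ on $[-1,1]$ and Chebyshev for the rest), then lets $n\to\infty$ — using the continuity of $\phi^*$ on $[-1,1]$ and of $W\mapsto\int_0^1\sqrt{\xi''(t)}\,\d W_t$ on $C([0,1])$ (integrate by parts; $\xi''$ is smooth) — and finally $r\uparrow1$, to obtain $\EE[\bar\al_1\int_0^1\sqrt{\xi''(t)}\,\d W_t-\phi^*(\bar\al_1)]\ge\chi^*$. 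Hence the functional in \eqref{e.main} evaluated at $\bar\al$ is at least $\chi^*-\frac12\sup_t\int_t^1\xi''(s)(s-\gamma^*_s)\,\d s=f$; combined with the reverse inequality contained in \eqref{e.first.bound} (which holds over any probability space), the supremum in \eqref{e.main} over $\msc P$ equals $f$ and is attained at $\bar\al$.

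The crux is this compactness-and-identification argument. Uniformly bounded martingales need not be tight in the usual $J_1$ Skorokhod topology — a near-optimal sequence could spread a non-vanishing amount of quadratic variation over a vanishing time window, producing oscillations that do not converge in $J_1$ — which is why one passes to a weaker path-space topology, and why the resulting enlargement (the Skorokhod space) is genuinely needed here, as announced in the introduction. The delicate verifications are then that the limit is a bona fide bounded martingale for which $W$ remains a Brownian motion once the filtration on $\msc P$ is chosen correctly, and the control of the terminal time $t=1$, the only instant not detected by convergence in measure. Once the lemma is proved, the explicit formula \eqref{e.def.bar.alpha} for the maximizer will show a posteriori that it is measurable with respect to the Brownian filtration, so that this enlargement was in fact unnecessary.
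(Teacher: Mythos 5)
Your plan---extract a subsequential weak limit from a maximizing sequence of $[-1,1]$-valued martingales, realize it on an enlarged space, and check it is still a bounded martingale for which $W$ is Brownian---is the same as the paper's. The paper simply implements it by applying Prokhorov's theorem to the finite-dimensional marginals of $(W,\al^{(n)}_{t_1},\dots,\al^{(n)}_{t_\ell})$ at times in $\Q\cap[0,1]$ (which crucially contains $t=1$) and then extends the limiting process by taking conditional expectations of $\al_1$, whereas you pass through the Meyer--Zheng pseudo-path topology and Skorokhod representation. That detour is both heavier than needed and the source of a real gap: convergence in Lebesgue measure on $[0,1]$ says nothing about the terminal value $\beta^{(n)}_1$, which is the only random variable appearing in the first term of \eqref{e.main}, and your attempted repair does not close. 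You bound the gap between $\chi^{(n)}$ and the same quantity with $\beta^{(n)}_1$ replaced by $\beta^{(n)}_r$ by a modulus of $\gamma^{(n)}_1-\gamma^{(n)}_r$ (Cauchy--Schwarz for the linear term, uniform continuity of $\phi^*$ plus Chebyshev for the other), and then hope to send $r\uparrow 1$. But $\gamma^{(n)}_1-\gamma^{(n)}_r=\EE[(\beta^{(n)}_1-\beta^{(n)}_r)^2]$ need not vanish as $r\uparrow1$ uniformly in $n$: take $\beta^{(n)}_t:=\EE[\sign(W_1-W_{1-1/n})\mid\F_t]$, which is $0$ on $[0,1-1/n]$ and has $\gamma^{(n)}_1-\gamma^{(n)}_r=1$ for all $r<1$ and $n>1/(1-r)$. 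Its Meyer--Zheng limit is the zero process while $\beta^{(n)}_1$ tends in law to a Rademacher variable, so $\bar\al_1$ as you define it (the left limit of the path-space limit) and the distributional limit of $\beta^{(n)}_1$ genuinely disagree, and your estimate leaves an error $\omega(\gamma^{(n)}_1-\gamma^{(n)}_r)$ that does not go to zero.

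The argument can be repaired without abandoning the time-$r$ comparison, but not with the estimates you invoke. For the linear part, use the martingale property first: since $\beta^{(n)}_1-\beta^{(n)}_r$ is orthogonal to $\F_r$, the inner product with $\int_0^1\sqrt{\xi''(t)}\,\d W_t$ reduces to one with $\int_r^1\sqrt{\xi''(t)}\,\d W_t$, and Cauchy--Schwarz then yields the bound $\sqrt{\gamma^{(n)}_1-\gamma^{(n)}_r}\,\sqrt{\xi'(1)-\xi'(r)}\le\sqrt{\xi'(1)-\xi'(r)}$, which does vanish uniformly in $n$ as $r\uparrow1$. For the dual-potential part, no quantitative estimate is needed at all: $\phi^*$ is convex, so $\phi^*(\beta^{(n)}_\cdot)$ is a submartingale, hence $\EE[\phi^*(\beta^{(n)}_r)]\le\EE[\phi^*(\beta^{(n)}_1)]$, which is exactly the inequality you need (you only want a lower bound on the value at $\bar\al$). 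Alternatively---and this is the paper's cleaner route---include $t=1$ directly in the family of times at which you extract joint weak convergence, which makes $\bar\al_1$ a bona fide weak limit of $\beta^{(n)}_1$ and renders the whole $r\uparrow1$ discussion unnecessary.
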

\begin{proof}
Let $\msc P$ be a probability space satisfying the assumption of Lemma~\ref{l.back.to.mart}, and let $(\al^{(n)})_{n \in \N}$ be a sequence of elements of $\bmart$ such that 
\begin{multline*}  %\label{e.}
f = \lim_{n \to \infty} 
\bigg\{\EE \Ll[  \al^{(n)}_1 \int_0^1 \sqrt{\xi''(t)}\, \d W_t - \phi^*\big(\al^{(n)}_1\big)\Rr] 
\\+ \frac 1 2 \inf_{t \in [0,1]} \int_t^1 \xi''(s) \big(\EE\big[(\al^{(n)}_s)^2 \big] - s\big) \, \d s\bigg\}.
\end{multline*}
For $n$ sufficiently large, the martingale $\al^{(n)}$ must take values in $[-1,1]$. In particular, for each $t \in [0,1]$, the family of random variables $(\al^{(n)}_t)_{n \in \N}$ is tight. By Prokhorov's theorem, we can therefore find a subsequence $(k_n)_{n \in \N}$, a probability space $\bar {\msc P} = (\bar{\Omega},\bar{\F},\bar{\PP})$ and random variables $(W,(\al_t)_{t \in \Q \cap [0,1]})$ over $\bar{\msc P}$ such that for every integer $\ell \ge 1$, $t_1, \ldots, t_\ell \in \Q \cap [0,1]$, and bounded continuous function $G : C([0,1]) \times \R^\ell \to \R$, 
\begin{equation}  
\label{e.conv.in.law}
\lim_{n \to \infty} \EE \Ll[ G(W,\al^{(k_n)}_{t_1}, \ldots, \al^{(k_n)}_{t_\ell}) \Rr] = \bar{\EE} \Ll[ G(W,\al_{t_1}, \ldots, \al_{t_\ell}) \Rr].
\end{equation}
Without loss of generality we assume from now on that the convergence above is valid along the full sequence, that is, we can take $k_n = n$. 
For every $t \in [0,1]$, we let $\bar{\F}_t$ be the $\sigma$-algebra on $\bar{\msc P}$ generated by the random variables $(W_s)_{s \le t}$ and $(\al_s)_{s \in \Q \cap [0,t]}$. The process~$W$ is a Brownian motion with respect to this filtration, and $(\al_t)_{t \in \Q \cap [0,1]}$ is a martingale with respect to $(\bar{\F}_t)_{t \in \Q \cap [0,1]}$. We extend $\al$ by setting, for every $t \in [0,1]$,
\begin{equation*}  %\label{e.}
\al_t := \bar{\EE} \Ll[ \al_1 \mid \bar\F_t \Rr] .
\end{equation*}
This turns $\al$ into a martingale with respect to the filtration $(\bar \F_t)_{t \in [0,1]}$. Since~$\al^{(n)}_1$ takes values in $[-1,1]$ and $\phi^*$ is continuous over this interval, we have that 
\begin{equation*}  %\label{e.}
\lim_{n \to \infty} \EE \big[\phi^*\big(\al^{(n)}_1\big)\big] = \bar\EE \Ll[\phi^*(\al_1)\Rr]  .
\end{equation*}
In order to control the continuity of the linear mapping $W \mapsto \int_0^1 \sqrt{\xi''(t)}\, \d W_t$, we introduce the notation
\begin{equation*}  %\label{e.}
\zeta(t) :=\sqrt{\xi''(t)}. 
\end{equation*}
We observe that $\zeta$ is continuously differentiable on $[0,1]$ whenever $\xi''(0) \neq 0$ or $\xi''(0) = \xi'''(0) = 0$, while in all cases, we can find a constant $C < \infty$ such that for every $t \in (0,1]$,
\begin{equation*}  %\label{e.}
\zeta'(t) \le C t^{-1/2}.
\end{equation*}
Integrating by parts (or applying It\^o's formula), we have that
\begin{align*}  %\label{e.}
\zeta(1) W_1 = \int_0^1 \zeta'(t) W_t \, \d t + \int_0^1 \zeta(t) \, \d W_t,
\end{align*}
and in particular, for a possibly larger value of the constant $C < \infty$, we have that for every $W \in C([0,1])$,
\begin{equation*}  %\label{e.}
\Ll| \int_0^1 \zeta(t) \, \d W_t \Rr| \le C \|W\|_{L^\infty([0,1])}.
\end{equation*}
From this, \eqref{e.conv.in.law}, the fact that $\al^{(n)}_1$ takes values in $[-1,1]$, and an approximation argument, we deduce that 
\begin{equation*}  %\label{e.}
\lim_{n \to \infty} \EE \Ll[  \al^{(n)}_1 \int_0^1 \sqrt{\xi''(t)}\, \d W_t \Rr] = \bar \EE \Ll[  \al_1 \int_0^1 \sqrt{\xi''(t)}\, \d W_t \Rr].
\end{equation*}
It also follows from \eqref{e.conv.in.law} that for every $t \in \Q \cap [0,1]$,
\begin{equation*}  %\label{e.}
\lim_{n \to \infty} \EE\big[(\al^{(n)}_t)^2 \big] = \bar \EE [\al_t^2].
\end{equation*}
Since the paths $t \mapsto \EE\big[(\al^{(n)}_t)^2 \big]$ and $t \mapsto \bar \EE[\al_t^2]$ are non-decreasing, one can extend this convergence to all points of continuity of the latter mapping, which form a subset of $[0,1]$ of full measure. Using also \eqref{e.continuity.G}, we deduce that 
\begin{equation*}  %\label{e.}
\lim_{n \to \infty} \inf_{t \in [0,1]} \int_t^1 \xi''(s) \big(\EE\big[(\al^{(n)}_s)^2 \big] - s\big) \, \d s = \inf_{t \in [0,1]} \int_t^1 \xi''(s) (\bar \EE[\al_s^2 ] - s) \, \d s.
\end{equation*}
In conclusion, we have constructed a martingale $\al$ over $\bar {\msc P}$ such that
\begin{equation*}  %\label{e.}
f = \bar \EE \Ll[  \al_1 \int_0^1 \sqrt{\xi''(t)}\, \d W_t - \phi^*(\al_1)\Rr]  
 + \frac 1 2 \inf_{t \in [0,1]} \int_t^1 \xi''(s) (\bar \EE[\al_s^2 ] - s) \, \d s .
\end{equation*}	
Since the converse inequality \eqref{e.first.bound} is valid on every probability space, this completes the proof.
\end{proof}
We are now ready to prove the main results of this paper.

\begin{proof}[Proof of Theorems~\ref{t.main} and \ref{t.minimax}]
Let $\msc P$ be a probability space such that the conclusion of Lemma~\ref{l.max.mart} is valid. We denote by $\bar \al \in \bmart$ a martingale that achieves the supremum in \eqref{e.main}, and we denote by $\bar \mu \in \Pr([0,1])$ the probability measure that achieves the infimum in \eqref{e.parisi.formula}. We have already observed in \eqref{e.f.inf.sup} and in the paragraph below it that by \cite{auffinger2015parisi}, there is exactly one such choice of $\bar \mu$, and
\begin{equation*}  %\label{e.}
f = \inf_{\mu \in \Pr([0,1])} \sup_{\al \in \bmart} \Gamma(\mu,\al) = \sup_{\al \in \bmart} \Gamma(\bar \mu,\al),
\end{equation*}
while Lemma~\ref{l.max.mart} and the integration by parts \eqref{e.ibp} state that 
\begin{equation*}  %\label{e.}
f = \sup_{\al \in \bmart} \inf_{\mu \in \Pr([0,1])} \Gamma(\mu,\al) = \inf_{\mu \in \Pr([0,1])} \Gamma(\mu,\bar \al) .
\end{equation*}
In particular, for every $\mu \in \Pr([0,1])$ and $\al \in \bmart$, we have
\begin{equation*}  %\label{e.}
\Gamma(\bar \mu,\al) \le f \quad \text{ and } \quad f \le \Gamma(\mu, \bar \al),
\end{equation*}
and thus $f = \Gamma(\bar \mu, \bar \al)$ and
\begin{equation}  
\label{e.pair.optimality}
\Gamma(\bar \mu,\al) \le \Gamma(\bar \mu, \bar \al) \le \Gamma(\mu, \bar \al).
\end{equation}
The first optimality condition in \eqref{e.pair.optimality} and Lemma~\ref{l.prog.to.mart} imply that $\bar \al$ must be as described in the statement of Theorem~\ref{t.minimax}. Notice that this martingale is measurable with respect to the filtration generated by the Brownian motion~$W$. In other words, there exists $\hat \al \in \mart$ such that the martingale $\bar \al \in \bmart$ is the image of $\hat \al$ under the canonical injection $\alpha \mapsto \alpha \circ W$ from $\mart$ to $\bmart$ (recall that $\mart$ denotes the space of bounded martingales over the Wiener space $\msc W$). The martingale $\hat \al$ has a canonical image in the space of bounded martingales of every probability space $\msc P$ under consideration in Theorem~\ref{t.main}. Recalling also from \eqref{e.first.bound} that the converse inequality is valid in every probability space, this completes the proof of Theorem~\ref{t.main}. The fact that the support of $\bar \mu$ is a subset of the set of maximizers of the mapping in~\eqref{e.def.map.t.min} is a consequence of the second optimality condition in \eqref{e.pair.optimality} and the integration by parts in~\eqref{e.ibp}. To show that the properties (1) and (2) in the statement of Theorem~\ref{t.minimax} characterize the pair $(\bar \mu, \bar \al)$ uniquely, we observe that these conditions imply that \eqref{e.pair.optimality} is valid for very $\mu \in \Pr([0,1])$ and $\al \in \bmart$, using again the integration by parts \eqref{e.ibp} and Lemma~\ref{l.prog.to.mart}, so we must have
\begin{equation*}  %\label{e.}
f = \inf_{\mu \in \Pr([0,1])} \sup_{\al \in \bmart} \Gamma(\mu,\al) \ge \inf_{\mu \in \Pr([0,1])} \Gamma(\mu,\bar \al) = \Gamma(\bar \mu, \bar \alpha)
\end{equation*}
as well as
\begin{equation*}  %\label{e.}
f = \sup_{\al \in \bmart} \inf_{\mu \in \Pr([0,1])} \Gamma(\mu,\al) \le \sup_{\al \in \bmart} \Gamma(\bar \mu,\al)  = \Gamma(\bar \mu,\bar \alpha). 
\end{equation*}
Hence $f = \Gamma(\bar \mu,\bar \alpha)$, the probability measure $\bar \mu$ must be the unique minimizer of \eqref{e.parisi.formula}, and the martingale $\bar \al$ must be the unique maximizer of \eqref{e.main}.
\end{proof}
\begin{proof}[Proof of Theorem~\ref{t.rsb}]
Let $\mu \in \Pr([0,1])$, let $(X_t)_{t \ge 0}$ be the strong solution to~\eqref{e.AC.SDE}, let $\alpha \in \bmart$ be such that \eqref{e.identity.alpha} holds for every $t \in [0,1]$, and let $g_\mu$ be such that, for every $t \in [0,1]$,
\begin{equation*}  %\label{e.}
g_\mu(t) := \int_t^1 \xi''(s) (\EE[\al_s^2 ] - s ) \, \d s .
\end{equation*}
By Lemma~\ref{l.prog.to.mart}, we have that 
\begin{equation*}  %\label{e.}
\Phi_\mu(0,h) = \EE \Ll[ \frac{1}{2}\int_{0}^{1}\xi''(t)\mu[0,t] \alpha^{2}_t\, \d t + \al_1 \int_0^1 \sqrt{\xi''(t)}\, \d W_t - \phi^*(\al_1)\Rr].
\end{equation*}
Combining this with Theorem~\ref{t.main} yields that
\begin{equation*}  %\label{e.}
f \ge  \Phi_\mu(0,h) - \frac 1 2 \int_{0}^{1}\xi''(t)\mu[0,t] \EE[\alpha^{2}_t]\, \d t 
\\+ \frac 1 2 \inf_{\Ll[0,1\Rr]} g_\mu.
\end{equation*}
The integration by parts \eqref{e.ibp} therefore yields the second inequality in~\eqref{e.rsb}; we recall that the first one follows from \eqref{e.parisi.formula}. By Theorem~\ref{t.minimax}, if we choose~$\mu$ to be the minimizer of \eqref{e.parisi.formula}, then the support of $\mu$ is a subset of the set of minimizers of $g_\mu$, so the right-hand side of \eqref{e.rsb} vanishes in this case. Conversely, if $\mu \in \Pr([0,1])$ is such that the right-hand side of \eqref{e.rsb} vanishes, then it must clearly be the unique minimizer of \eqref{e.parisi.formula}.
\end{proof}

\small
\bibliographystyle{plain}
\bibliography{uninverting}

\newcommand{\noop}[1]{} \def\cprime{$'$}
\begin{thebibliography}{10}

\bibitem{addario2019algorithmic}
Louigi Addario-Berry and Pascal Maillard.
\newblock The algorithmic hardness threshold for continuous random energy
  models.
\newblock {\em Math. Stat. Learn.}, 2(1):77--101, 2019.

\bibitem{aliprantis2006infinite}
Charalambos~D. Aliprantis and Kim~C. Border.
\newblock {\em Infinite dimensional analysis}.
\newblock Springer, Berlin, third edition, 2006.

\bibitem{auffinger2015parisi}
Antonio Auffinger and Wei-Kuo Chen.
\newblock The {P}arisi formula has a unique minimizer.
\newblock {\em Comm. Math. Phys.}, 335(3):1429--1444, 2015.

\bibitem{auffinger2017parisi}
Antonio Auffinger and Wei-Kuo Chen.
\newblock Parisi formula for the ground state energy in the mixed {$p$}-spin
  model.
\newblock {\em Ann. Probab.}, 45(6B):4617--4631, 2017.

\bibitem{boucheron2013concentration}
St{\'e}phane Boucheron, G{\'a}bor Lugosi, and Pascal Massart.
\newblock {\em Concentration inequalities: A nonasymptotic theory of
  independence}.
\newblock Oxford university press, 2013.

\bibitem{bovier2009aizenman}
Anton Bovier and Anton Klimovsky.
\newblock The {A}izenman-{S}ims-{S}tarr and {G}uerra's schemes for the {SK}
  model with multidimensional spins.
\newblock {\em Electron. J. Probab.}, 14:no. 8, 161--241, 2009.

\bibitem{bovier2004derrida1}
Anton Bovier and Irina Kurkova.
\newblock Derrida's generalised random energy models. {I}. {M}odels with
  finitely many hierarchies.
\newblock {\em Ann. Inst. H. Poincar\'{e} Probab. Statist.}, 40(4):439--480,
  2004.

\bibitem{bovier2004derrida2}
Anton Bovier and Irina Kurkova.
\newblock Derrida's generalized random energy models. {II}. {M}odels with
  continuous hierarchies.
\newblock {\em Ann. Inst. H. Poincar\'{e} Probab. Statist.}, 40(4):481--495,
  2004.

\bibitem{capocaccia1987existence}
D.~Capocaccia, Marzio Cassandro, and Pierre Picco.
\newblock On the existence of thermodynamics for the generalized random energy
  model.
\newblock {\em J. Statist. Phys.}, 46(3-4):493--505, 1987.

\bibitem{elalaoui2021optimization}
Ahmed El~Alaoui, Andrea Montanari, and Mark Sellke.
\newblock Optimization of mean-field spin glasses.
\newblock {\em Ann. Probab.}, 49(6):2922--2960, 2021.

\bibitem{elalaoui2022sampling}
Ahmed El~Alaoui, Andrea Montanari, and Mark Sellke.
\newblock Sampling from the {S}herrington-{K}irkpatrick {G}ibbs measure via
  algorithmic stochastic localization.
\newblock In {\em 2022 {IEEE} 63rd {A}nnual {S}ymposium on {F}oundations of
  {C}omputer {S}cience}, pages 323--334. IEEE Computer Soc., Los Alamitos, CA,
  2022.

\bibitem{fan1953minimax}
Ky~Fan.
\newblock Minimax theorems.
\newblock {\em Proc. Nat. Acad. Sci. U.S.A.}, 39:42--47, 1953.

\bibitem{gue03}
Francesco Guerra.
\newblock Broken replica symmetry bounds in the mean field spin glass model.
\newblock {\em Comm. Math. Phys.}, 233(1):1--12, 2003.

\bibitem{ho2023sampling}
Fu-Hsuan Ho.
\newblock Sampling from the {G}ibbs measure of the continuous random energy
  model and the hardness threshold.
\newblock {Preprint, arXiv:2308.00857}.

\bibitem{ho2022efficient}
Fu-Hsuan Ho and Pascal Maillard.
\newblock Efficient approximation of branching random walk {G}ibbs measures.
\newblock {\em Electron. J. Probab.}, 27:Paper No. 75, 18, 2022.

\bibitem{huang2021tight}
Brice Huang and Mark Sellke.
\newblock Tight {L}ipschitz hardness for optimizing mean field spin glasses.
\newblock \noop{2021}{Preprint, arXiv:2110.07847}.

\bibitem{huang2023algorithmic}
Brice Huang and Mark Sellke.
\newblock Algorithmic threshold for multi-species spherical spin glasses.
\newblock \noop{2023}{Preprint, arXiv:2303.12172}.

\bibitem{jagannath2016dynamic}
Aukosh Jagannath and Ian Tobasco.
\newblock A dynamic programming approach to the {P}arisi functional.
\newblock {\em Proc. Amer. Math. Soc.}, 144(7):3135--3150, 2016.

\bibitem{jagannath2017low}
Aukosh Jagannath and Ian Tobasco.
\newblock Low temperature asymptotics of spherical mean field spin glasses.
\newblock {\em Comm. Math. Phys.}, 352(3):979--1017, 2017.

\bibitem{jagannath2018bounds}
Aukosh Jagannath and Ian Tobasco.
\newblock Bounds on the complexity of replica symmetry breaking for spherical
  spin glasses.
\newblock {\em Proc. Amer. Math. Soc.}, 146(7):3127--3142, 2018.

\bibitem{MPV}
Marc M{\'e}zard, Giorgio Parisi, and Miguel Virasoro.
\newblock {\em Spin glass theory and beyond: an introduction to the replica
  method and its applications}, volume~9.
\newblock World Scientific Publishing Company, 1987.

\bibitem{montanari2021optimization}
Andrea Montanari.
\newblock Optimization of the {Sherrington-Kirkpatrick Hamiltonian}.
\newblock {\em SIAM Journal on Computing}, (0):FOCS19--1, 2021.

\bibitem{mourrat2020nonconvex}
Jean-Christophe Mourrat.
\newblock Nonconvex interactions in mean-field spin glasses.
\newblock {\em Probab. Math. Phys.}, 2(2):281--339, 2021.

\bibitem{mourrat2019parisi}
Jean-Christophe Mourrat.
\newblock The {P}arisi formula is a {H}amilton-{J}acobi equation in
  {W}asserstein space.
\newblock {\em Canad. J. Math.}, 74(3):607--629, 2022.

\bibitem{mourrat2020extending}
Jean-Christophe Mourrat and Dmitry Panchenko.
\newblock Extending the {P}arisi formula along a {H}amilton-{J}acobi equation.
\newblock {\em Electron. J. Probab.}, 25:Paper No. 23, 17, 2020.

\bibitem{panchenko2005free}
Dmitry Panchenko.
\newblock Free energy in the generalized {S}herrington-{K}irkpatrick mean field
  model.
\newblock {\em Rev. Math. Phys.}, 17(7):793--857, 2005.

\bibitem{pan.aom}
Dmitry Panchenko.
\newblock The {P}arisi ultrametricity conjecture.
\newblock {\em Ann. of Math. (2)}, 177(1):383--393, 2013.

\bibitem{pan}
Dmitry Panchenko.
\newblock {\em The {S}herrington-{K}irkpatrick model}.
\newblock Springer Monographs in Mathematics. Springer, New York, 2013.

\bibitem{parisi1979infinite}
Giorgio Parisi.
\newblock Infinite number of order parameters for spin-glasses.
\newblock {\em Phys. Rev. Lett.}, 43(23):1754, 1979.

\bibitem{parisi1980order}
Giorgio Parisi.
\newblock The order parameter for spin glasses: a function on the interval 0-1.
\newblock {\em J. Phys. A}, 13(3):1101, 1980.

\bibitem{parisi1980sequence}
Giorgio Parisi.
\newblock A sequence of approximated solutions to the {S-K} model for spin
  glasses.
\newblock {\em J. Phys. A}, 13(4):L115--L121, 1980.

\bibitem{parisi1983order}
Giorgio Parisi.
\newblock Order parameter for spin-glasses.
\newblock {\em Phys. Rev. Lett.}, 50(24):1946, 1983.

\bibitem{sellke2021optimizing}
Mark Sellke.
\newblock Optimizing mean field spin glasses with external field.
\newblock {Preprint, arXiv:2105.03506}.

\bibitem{sion1958minimax}
Maurice Sion.
\newblock On general minimax theorems.
\newblock {\em Pacific J. Math.}, 8:171--176, 1958.

\bibitem{subag2021following}
Eliran Subag.
\newblock Following the ground states of full-{RSB} spherical spin glasses.
\newblock {\em Comm. Pure Appl. Math.}, 74(5):1021--1044, 2021.

\bibitem{Tpaper}
Michel Talagrand.
\newblock The {P}arisi formula.
\newblock {\em Ann. of Math. (2)}, 163(1):221--263, 2006.

\end{thebibliography}

\end{document}